\newtheorem{remark}{Remark}
\newtheorem{theorem}{Theorem}
\newtheorem{lemma}{Lemma}
\newcommand{\jump}[1]{\llbracket #1 \rrbracket}
\newcommand{\divergence}{\operatorname{div}}
\renewcommand{\div}{\operatorname{div}}
\newcommand{\curl}{\operatorname{curl}}
\newcommand{\tr}{\operatorname{tr}}
\newcommand{\dev}{\operatorname{dev}}
\newcommand{\id}{\operatorname{Id}}
\newcommand{\Hcd}{H(cd)}
\newcommand{\rr}{\mathbb{R}}
\renewcommand{\d}{\partial}
\newcommand{\DD}{{C^{\infty}_c}}
\newcommand{\mesh}{\mathcal{T}}
\newcommand{\facets}{\mathcal{F}}
\newcommand{\trans}{{\operatorname{T}}}
\newcommand{\hhj}{{\operatorname{hhj}}}
\newcommand{\numeoc}[1]{\num[round-precision=1,round-mode=places, scientific-notation=false]{#1}}
\title[HHJ-like methods for stream function formulations]{A Hellan-Herrmann-Johnson-like method for the Stream function formulation of the Stokes equations in two and three space dimensions}
\author[P.~L.~Lederer]{Philip L. Lederer}
\address{Institute for Analysis and Scientific Computing, TU Wien,
Wiedner Hauptstra{\ss}e 8-10, 1040 Wien, Austria}
\email{philip.lederer@tuwien.ac.at}
\begin{document}
\maketitle
\begin{abstract}
  We introduce a new discretization for the stream function formulation of the incompressible Stokes equations in two and three space dimensions.
  The method is strongly related to the Hellan-Herrmann-Johnson method and is based on the recently discovered mass
  conserving mixed stress formulation [J. Gopalakrishnan, P.L. Lederer, J. Sch\"oberl, IMA Journal of numerical Analysis, 2019] that approximates the velocity in an $H(\divergence)$-conforming space and
  introduces a new stress-like variable for the approximation of the gradient of the velocity within the function space $H(\curl\div)$. The properties of the (discrete) de Rham complex allows to extend this method to a stream function formulation in two and three space dimensions.
  We present a detailed stability analysis in the continuous and the discrete setting where the stream function $\psi$ and its approximation $\psi_h$ are elements of $H(\curl)$ and the $H(\curl)$-conforming N\'ed\'elec finite element space, respectively. We conclude with an error analysis revealing optimal convergence rates for the error of the discrete velocity $u_h = \curl(\psi_h)$ measured in a discrete $H^1$-norm. We present numerical examples to validate our findings and discuss structure-preserving properties such as pressure-robustness. 
\end{abstract}
\keywords{
  Stokes equations, Hellan-Herrmann-Johnson, Stream function formulation, incompressible flows.}
\maketitle
\section{Introduction}
\label{sec:introduction}

In this work we present a new discretization of the stream function formulation of the Stokes equations in two and three space dimensions. To this end, let $\Omega \subset \mathbb{R}^d$ with $d =2,3$ be a bounded simply connected domain with a Lipschitz boundary $\partial \Omega$.  The Stokes problem is given by: Find $u:\Omega\rightarrow\mathbb{R}^d$
and $p:\Omega \to \mathbb{R}$ such that
\begin{subequations}
  \begin{alignat}{2}
    -\nu \Delta u + \nabla p &= f && \qquad\text{in } \Omega,\label{eq:stokesone}
    \\
    \divergence(u) &= 0 && \qquad \text{in } \Omega,
    \\
    u &= 0 && \qquad \text{on } \partial\Omega,
  \end{alignat}
  \label{eq:stokes}
\end{subequations}
where $f:\Omega \to \mathbb{R}^d$ is a given body force and $\nu \in \mathbb{R}^+$ is the constant kinematic viscosity. Here, $u$ denotes the velocity of the considered fluid and $p$ is the corresponding (kinematic) pressure. Note, that we only consider homogeneous Dirichlet boundary conditions in this work (see also comment below). Following \cite{Girault:book, BENDALI1985537}, the property $\divergence(u) = 0$ motivates to define the stream function formulation of the Stokes equations given by: Find $\psi: \Omega\rightarrow\mathbb{R}^{d(d-1)/2}$ such that in two dimensions we have
\begin{subequations}
  \begin{alignat}{2}
    -\nu \Delta^2 \psi  &= \curl(f) && \qquad\text{in } \Omega,
    \\
    \psi = \frac{\partial \psi}{\partial n} & = 0 && \qquad \text{on } \partial\Omega,
  \end{alignat}
  \label{eq:streamstokes}
\end{subequations}
and in three dimensions 
\begin{subequations}
  \begin{alignat}{2}
    -\nu \Delta^2 \psi  &= \curl(f) && \qquad\text{in } \Omega,
    \\
    \divergence(\psi) &= 0 && \qquad\text{in } \Omega,
    \\
    \psi \times n  = \curl(\psi) \times n & =0 && \qquad \text{on } \partial\Omega.
  \end{alignat}
  \label{eq:streamstokestwo}
\end{subequations}
Then we have the relation $\curl(\psi) = u$. One of the main attractions of deriving a discrete method for \eqref{eq:streamstokes} and \eqref{eq:streamstokestwo} instead of \eqref{eq:stokes} lies on the hand: Whereas standard mixed finite element methods for the Stokes equations usually enforce the incompressibility constraint only in a weak sense, the discrete velocity solution obtained from a stream function formulation is always exactly divergence-free. This structure preserving property helps for example in the case of convection dominated flows (when we consider the full Navier-Stokes equations, see \cite{Lehrenfeld:2016,Cockburn:2004b,Cockburn:2007b}) and further allows to derive pressure independent velocity error estimates. Such estimates are called pressure robust, see \cite{Linke:2014, Brennecke:2015,Linke:2016c, John:2017,Lederer:2017b}, and are of great interest particularly in the case of vanishing viscosity where an inaccurate pressure approximation might induce a blow up of the velocity. These findings were also extended to the definition of pressure robust error estimators, see \cite{MR3962897} and \cite{MR3240852}. In the latter work the authors presented a residual estimator with the help of the stream function formulation which implicitly lead to pressure robustness. Finally, the stream function formulation recently got popular for the approximation of incompressible fluids on surfaces, see \cite{MR4050536}.

In the derivation of the above equations it was crucial that the domain is simply connected. In a more general setting, the equations and boundary conditions are much more evolved since the potential $\psi$ is not uniquely defined any more, see fore example in \cite{BENDALI1985537} for a detailed discussion. The approximation of the fourth order problem \eqref{eq:streamstokes} (also known as biharmonic problem) requires finite elements of higher regularity.  To overcome this problem, it is common to reformulate the biharmonic problem to the directly related stream function vorticity formulation.
Many authors have studied this problem, see for example \cite{MR1717814, MR553350, MR1726723, MR1186736,nedelecstream}. We also want to cite the very recent work \cite{MR4068857} an the references therein for a further discussion on the connection of the stream function and stream function vorticity formulation and the occurring boundary conditions. Regarding our choice of homogeneous Dirichlet boundary conditions in this work we want to mention, that this is a non trivial case as it was discussed in detail in \cite{veclapDBC}. Therein the authors show, that one might loose optimal convergence when mixed finite element methods including the vorticity are used. However, we want to emphasize that the methods proposed in this work are of optimal order. In \cite{BENYU19971,2587098,MR1825701}, the authors focused on the pure stream function formulation (in two space dimensions) given by \eqref{eq:streamstokes} and derived a finite difference scheme for the approximation of the bi-Laplacian operator. A mixed finite element method was derived in \cite{CIARLET1974125}. Due to the huge success of discontinuous Galerkin methods (DG) for elliptic problems, the techniques were also applied to fourth order problems, see \cite{MR2298696, MR1915664, MR2048235, MR2329348,MR3240852}. 
  
We particularly want to mention the works \cite{hellan,herrmann,johnson, comodi}, since they play a key role in the derivation of the methods introduced in this work. Therein the authors derived a mixed method, also known as the Hellan-Herrmann-Johnson method (HHJ), by introducing an auxiliary variable to approximate the matrix valued symmetric gradient. This has many advantages as it results for example in a reduced coupling in the finite element system matrix compared to a DG formulation and that no second order differential operators have to be explicitly implemented in the finite element code. In \cite{Girault:book}, the authors showed that this techniques can also be used to approximate the stream function formulation and presented a detailed analysis. Nevertheless, the authors claim that the extension to the three dimensional case is not straight forward.

This work is dedicated to fill this gap. To this end we first introduce a modified (rotated) version of the HHJ-method in two dimensions which can then be easily extended to the three dimensional setting. This is possible since the new modified HHJ-method can be interpreted as a discrete stream function formulation of the mass conserving mixed stress formulation (MCS) defined in \cite{Lederer:2019b,Lederer:2019c,lederer2019mass}. The MCS method approximates the discrete velocity $u_h$ in an $H(\divergence)$-conforming finite element space and the discrete pressure in the appropriate ($L^2$-conforming) space of piece wise polynomials. This leads to exactly divergence-free velocity approximations, i.e. $\div(u_h)=0$. The properties of the discrete de Rham complex then motivates to define a discrete stream function $\psi_h$ in an $H(\curl)$-conforming finite element space such that $\curl(\psi_h) = u_h$, which leads to the resulting modified HHJ-method.
  

  Finally note, that there exists a similar connection between the (standard) HHJ-method and the tangential-displacement and normal-normal-stress continuous mixed finite element method for elasticity \cite{MR3712290,MR2826472,Pechstein2018} which motivated the definition of the methods within this work.

The paper is organized as follows. In Section 2 we define the basic notation and symbols that we shall use throughout this work. In Section 3 we discuss the classical weak formulation and a new weak formulation with reduced regularity of the stream function formulation, and present a detailed stability analysis. Section 4 is dedicated to the derivation of the new modified HHJ-method in two and three space dimensions. The technical details needed to prove discrete stability and convergence of the error in appropriate norms are included in Section 5. In Section 6 we present a simple post processing for a pressure discretization. We conclude the work with Section 7 where we present numerical examples to illustrate the theory.
  \section{Preliminaries}
 Let $\DD(\Omega)$ denote the set of infinitely differentiable compactly supported real-valued functions on $\Omega$ and let $(\DD)'(\Omega)$ denote the space of distributions as usual. In this work we include the range in the notation, hence \
\begin{align*}
  \DD(\Omega, \rr^d) = \{ u: \Omega \to \rr^d : u_i \in \DD(\Omega)\} \\
  \DD(\Omega, \rr^{d \times d}) = \{ u: \Omega \to \rr^{d \times d}: u_{ij} \in \DD(\Omega)\},
\end{align*}
represent the vector valued and matrix-valued versions of $\DD(\Omega) = \DD(\Omega, \rr)$. This notation is  extended in an obvious fashion to other function spaces as needed.

Depending on the type of the function, the gradient~$\nabla$ is to be understood from the context as an operator that results in either a vector whose components are $[\nabla \phi]_i = \d_i \phi$ (where $\d_i$ is the partial derivative $\d/\d x_i$) for $\phi \in \DD'(\Omega, \rr)$ or a
matrix whose entries are $[\nabla \phi]_{ij} = \d_j \phi_i$ for
$\phi \in \DD'(\Omega, \rr^d)$. Similarly, the ``curl'' is given as any of the following three differential operators
\begin{align*}
  \curl(\phi)
  & = (-\partial_2 \phi, \partial_1 \phi)^\trans, 
  && \text{ for } \phi \in \DD'(\Omega, \rr) \text{ and } d=2,
  \\
  \curl( \phi)
  & = -\partial_2 \phi_1+ \partial_1 \phi_2,
  && \text{ for } \phi \in \DD'(\Omega, \rr^2) \text{ and } d=2,
  \\
  \curl (\phi)
  & 
    = \nabla \times \phi   
  && \text{ for } \phi \in \DD'(\Omega, \rr^3) \text{ and } d=3,     
\end{align*}
where $(\cdot)^\trans$ denotes the transpose.  Finally, we define the $\divergence(\phi)$ as either $\sum_{i=1}^d \d_i \phi_i$ for
vector-valued $\phi \in \DD'(\Omega, \rr^d),$ or the row-wise divergence
$\sum_{j=1}^d \d_j \phi_{ij}$ for matrix-valued $\phi \in \DD'(\Omega, \rr^{d \times d})$.

We denote by $L^2(\Omega, \rr)$ the space of square-integrable functions on $\Omega$ and by $H^m(\Omega, \rr)$ the standard Sobolev space of order $m$. In particular we further use the space $H_0^1(\Omega, \rr^d) = \{ v \in H^1(\Omega, \rr^d): v|_{\partial \Omega} = 0 \}$. With $\tilde t = d(d-1)/2$ and the above (weak) differential operators we then further define the Sobolev spaces 
\begin{align*}
  H(\divergence,\Omega)
  &= \{ v \in L^2(\Omega, \rr^d): \divergence(v) \in
    L^2(\Omega) \},
  \\
  H(\curl,\Omega) 
  &= \{ v \in L^2(\Omega, \rr^d):
    \curl(v) \in L^2(\Omega,
    \rr^{\tilde t}) \}.
\end{align*}
Similarly as before, we denote by $H_0(\divergence,\Omega) = \{ v \in  H(\divergence,\Omega): v\cdot n|_{\partial \Omega} = 0\}$ and $H_0(\curl,\Omega) = \{ v \in  H(\divergence,\Omega): v \times n|_{\partial \Omega} = 0\}$, where $n$ denotes the outward unit normal vector on $\partial \Omega$. On the above spaces we use the standard symbols for the norms, but we will omit the domain $\Omega$ to simplify the notation, i.e. while $\| \cdot \|_{L^2}$ is the $L^2$-norm on $\Omega$ we denote by $\| \cdot\|_{H^1}, \| \cdot\|_{H(\divergence)}$ and $\| \cdot\|_{H(\curl)}$ the norms on $\Omega$ of the spaces $H^1, H(\divergence)$ and $H(\curl)$, respectively. Other non standard spaces and definitions are defined later in the work when they appear in a proper context.

Finally, in this work we use $A \sim B $ to indicate that there are 
constants $c, C>0$ that are independent of the mesh size $h$ (defined in Section \ref{sec::discretisation}) and the viscosity $\nu$ such that $c A \le B \le C A$.  We also use $A \lesssim B$ when there is a $C>0$ independent of $h$ and $\nu$ such that $A \le C B$. In the same manner we also define the symbol $\gtrsim$.


\section{The continuous setting}
\subsection{Weak formulations of the Stokes equations}
 Defining the spaces
\begin{align*}
  X := H_0^1(\Omega, \rr^d), \quad \textrm{and} \quad Q := \{ q \in L^2(\Omega, \rr): \int_\Omega q \dif x = 0 \},
\end{align*}
and assuming regularity $f \in L^2(\Omega, \rr)$, the weak formulation of \eqref{eq:stokes} is given by: Find $(u,p) \in X \times Q$ such that
  \begin{subequations}
  \begin{alignat}{2}
    \int_\Omega \nu \nabla u : \nabla v \dif x - \int_\Omega \divergence(v)  p \dif x &= \int_\Omega f \cdot v && \qquad\forall v \in X
    \\
    \int_\Omega \divergence(u)q  \dif x&= 0 && \qquad \forall q \in Q.
  \end{alignat}
  \label{eq:weakstokes}
  \end{subequations}
  In the recent work \cite{lederer2019mass}, a new weak formulation of the Stokes equations was derived that used a weaker regularity assumption of the velocity space. The idea is motivated by introducing te auxiliary variable $\sigma:= \nu\nabla u$. Defining the trace of a matrix $\tr{\tau} :=\sum_{i+1}^d \tau_{ii}$ and the deviator $\dev{\tau} := \tau - (\tr{\tau}/d) \id $ we have 
  \begin{align*}
    \dev{\sigma} = \dev{\nu \nabla u} = \nu \nabla u - \frac{\nu}{d} \tr{\nabla u} \id = \nu(\nabla u - \frac{1}{d} \divergence (u) \id) = \nu \nabla u,
  \end{align*}
  since $\divergence(u)=0$. By that we can reformulate equations \eqref{eq:stokes} as
  \begin{subequations}
  \begin{alignat}{3}
    \nu^{-1} \dev{\sigma} - \nabla u & = 0  \quad &&\textrm{in } \Omega ,   \label{eq:mixedstokesone}\\
    \divergence (\sigma) - \nabla p & = -f \quad &&\textrm{in }  \Omega,  \label{eq:mixedstokestwo}\\
    \divergence(u) & = 0\quad &&\textrm{in }  \Omega,  \label{eq:mixedstokesthree}\\
    u & = 0 \quad &&\textrm{on } \partial \Omega.  \label{eq:mixedstokesfour}
  \end{alignat}
\label{eq:mixedstokes}
  \end{subequations}
  To derive a variational formulation of equation \eqref{eq:mixedstokes}, we define the velocity space $V := H_0(\divergence, \Omega)$ and the matrix valued function spaces
  \begin{align*}  
    H(\curl \divergence) &:= \{ \tau \in L^2(\Omega, \rr^{d \times d}): \divergence(\tau) \in V'\}, \\
    \Sigma &:= \{ \tau \in H(\curl \divergence): \tr{\tau}=0 \}.
  \end{align*}
  Note, that a proper norm on $H(\curl \divergence)$ is defined by $\| \tau \|^2_{\Hcd} := \| \tau \|_{L^2}^2 + \| \divergence(\tau) \|^2_{V'}$, where $V'$ denotes the dual space and $\| \cdot \|_{V'}$ the corresponding dual space norm. The definition of the space $\Sigma$ is motivated by the distributional divergence of an arbitrary function $\tau \in \Sigma$ given by
 \begin{align*}
    \langle{ \divergence(\tau),  \varphi}\rangle_{V} = - \int_\Omega \tau : \nabla \varphi \dif x \quad \forall \varphi \in  \DD(\Omega, \rr^d).
 \end{align*}
 Hence, by a density argument of smooth functions in $H_0(\div, \Omega)$, a weak formulation of \eqref{eq:mixedstokesone} is given by $\nu^{-1}\int_\Omega \sigma:\tau + \langle{ \divergence(\tau),  u}\rangle_{V} = 0$, where $\langle \cdot , \cdot \rangle_V$ denotes the duality bracket on $V' \times V$. For more details we refer to chapter 4 in \cite{lederer2019mass}. Using similar arguments for the other lines of \eqref{eq:mixedstokes}, the mass conserving mixed stress formulation (MCS) then reads as: Find $(\sigma, u ,p) \in V \times \Sigma \times Q$ such that
    \begin{subequations}
  \begin{alignat}{2}
    \int_\Omega \frac{1}{\nu} \sigma: \tau \dif x  + \langle{ \divergence(\tau),  u}\rangle_{V}  & = 0&&    \quad \forall \tau \in \Sigma, \label{eq::mixedstressstokesweakone}  \\
    \langle \divergence(\sigma), v\rangle_{V}  + \int_\Omega\divergence(v) p \dif x  & = -\int_\Omega  f \cdot v \dif x    &&    \quad \forall v \in V,  \label{eq::mixedstressstokesweaktwo} \\ 
    \int_\Omega \divergence(u) q \dif x &=0  &&    \quad \forall q \in Q.\label{eq::mixedstressstokesweakthree} 
\end{alignat}
\label{eq::mixedstressstokesweak}                                    
  \end{subequations}
  where we used that $\dev = \id$ for functions in $\Sigma$. Uniqueness and existence of \eqref{eq::mixedstressstokesweak} in $V \times \Sigma \times Q$ (with the corresponding natural norms) was proven in Section 4.3.1 in~\cite{lederer2019mass}. Note, that the velocity solution $u \in V$ of equation \eqref{eq::mixedstressstokesweak} has a reduced regularity in contrast to the velocity solution of the standard weak formulation of the Stokes equation given by \eqref{eq:weakstokes}.
  \begin{remark} \label{rem::bndcond}
    The homogeneous Dirichlet boundary conditions \eqref{eq:mixedstokesfour} were implicitly split into a normal and a tangential part. Whereas the homogeneous normal Dirichlet values are incorporated as essential boundary conditions in the space $V = H_0(\divergence,\Omega)$, the homogeneous tangential Dirichlet values are included as natural boundary conditions in \eqref{eq::mixedstressstokesweakone}, see also Section 4.3 in \cite{lederer2019mass}.
  \end{remark}
  
  \subsection{The stream function formulation}
  \subsubsection{The standard weak formulation} \label{sec::stdstreamform}
  In this section we summarize the findings of Section 5.2 in \cite{Girault:book} to derive the standard variational stream function formulations of the Stokes problem. 
  By the incompressibility constraint $\divergence(u) = 0$, the velocity solution $u$ of the Stokes equation \eqref{eq:stokes} can be expressed as the $\curl$ of a scalar-valued ($d = 2$) or vector-valued ($d = 3$) potential $\psi$ called the stream function, i.e. $\curl(\psi) = u$. To this end we define for $d = 2$ 
  \begin{align*}
    \Psi &:= \{\phi \in H^2(\Omega, \rr): \phi|_{\partial \Omega} = 0, \frac{\partial \phi}{\partial n}\Big|_{\partial \Omega} = 0 \},
  \end{align*}
  and for $d=3$
  \begin{align*}  
        \Psi &:= \{\phi \in L^2(\Omega, \rr^3): \div(\phi) \in H^1(\Omega, \rr), \curl(\phi) \in H_0^1(\Omega, \rr^3), \phi \times n |_{\partial \Omega} = 0 \}.
  \end{align*}
The stream function can be characterized as the unique solution of the weak problem: Find $\psi \in \Psi$ such that
  \begin{alignat}{2}  \label{eq:stream}
    \int_\Omega \Delta \psi : \Delta \phi \dif x &= \int_\Omega f \cdot \curl(\phi) && \quad \forall \phi \in \Psi.
  \end{alignat}
  According to Theorem 5.5 and Lemma 5.1 in \cite{Girault:book}, equation \eqref{eq:stream} has an unique solution such that $\curl(\psi) = u $ is the unique solution of the standard variational formulation of the Stokes equation \eqref{eq:weakstokes}. 
  In contrast to \eqref{eq:streamstokestwo}, the weak formulation \eqref{eq:stream} does not directly include the constraint $\div(\psi) = 0$ in three space dimensions, as this follows implicitly due to the choice of $\Psi$, see Lemma 5.1 in \cite{Girault:book}.
  \subsubsection{A weak formulation with reduced regularity} \label{sec::redregstreamform}
  In the following we derive a new weak formulation for the stream function formulation that is motivated by the MCS formulation given by equation \eqref{eq::mixedstressstokesweak}.

  We start with the case $d = 3$. As discussed in the previous section, the solution of the MCS formulation given by equation \eqref{eq::mixedstressstokesweak}, fulfills the weaker regularity $u \in H_0(\divergence, \Omega)$. From the properties of the de Rham complex, see for example in \cite{Boffi:book}, the divergence constraint $\div(u) = 0$ then motivates the existence of a vector potential $\psi \in H_0(\curl, \Omega)$ such that $\curl(\psi) = u$. Similarly as before, we then further introduce a new variable $\sigma \in \Sigma$ such that $\sigma = "\nabla \curl(\psi)"$ (see equation \eqref{eq:mixedstokesone}) in a weak sense since the gradient is not well-defined for $\curl(\psi)$. For an arbitrary $\tau \in \Sigma$ this leads to 
\begin{align*}
  \int_\Omega \frac{1}{\nu} \sigma : \tau \dif x + \langle \div(\tau), \curl(\psi) \rangle_V = 0.
\end{align*}
Note, that the duality pair is well defined for the function $\curl(\psi)$ since 
\begin{align} \label{eq::admiss}
  \div(\curl(\phi)) = 0, \quad \textrm {and} \quad
  \curl(\phi) \cdot n = \div_{\partial \Omega}(\phi \times n) = 0 \quad \forall \phi \in H_0(\curl),
\end{align}
where $\div_{\partial \Omega}$ is the surface divergence, see Section 2.1 in \cite{Boffi:book}. This shows that $\curl(\phi) \in H_0(\div, \Omega) = V$ for all $\phi \in H_0(\curl)$, hence $\langle \div(\tau), \curl(\psi) \rangle_V$ is well defined. Similarly, we derive a weak formulation of the momentum equation \eqref{eq:mixedstokestwo} by testing with a function $\curl(\phi)$ with $\phi \in H_0(\curl)$ to get
\begin{align*}
  \langle \div(\sigma), \curl(\phi) \rangle_V = - \int_\Omega f \cdot \curl(\phi) \dif x,
\end{align*}
where we used integration by parts and that $\curl(\nabla p)) = 0$, hence the pressure integral disappeared. Finally, to uniquely determine the vector potential $\psi$, we introduce a gauging as it is also known for mixed formulation of the Maxwell's equations, see for example in \cite{monk2003finite}. To this end we demand that $\psi$ is orthogonal on gradient fields which mimics the conditions $\div(\psi) = 0$ in \eqref{eq:streamstokestwo}. Introducing the spaces 
\begin{align}
  W := H_0(\curl, \Omega) \quad \textrm{and } \quad S := H^1_0(\Omega, \rr),
\end{align}
we then have the weak formulation: Find $(\psi, \sigma, \lambda) \in W \times \Sigma \times S$ such that
\begin{subequations}
  \begin{alignat}{2}
    \int_\Omega \frac{1}{\nu} \sigma: \tau \dif x  + \langle{ \divergence(\tau),  \curl(\psi)}\rangle_{V}  & = 0&&    \quad \forall \tau \in \Sigma, \label{eq::mixedstressstreamweakone}  \\
    \langle \divergence(\sigma), \curl(\phi) \rangle_{V}  + \int_\Omega \phi \cdot \nabla \lambda & = -\int_\Omega  f \cdot \curl(\phi) \dif x    &&    \quad \forall \phi \in W,  \label{eq::mixedstressstreamweaktwo} \\ 
    \int_\Omega \psi \cdot \nabla \mu \dif x &=0  &&   \quad \forall \mu \in S.\label{eq::mixedstressstreamweakthree} 
\end{alignat}
\label{eq::mixedstressstreamweak}                                    
\end{subequations}
\begin{remark}
Similarly as for the solution of \eqref{eq::mixedstressstokesweak}, the homogeneous Dirichlet boundary conditions in \eqref{eq::mixedstressstreamweak} are split into a normal and a tangential part, see Remark~\ref{rem::bndcond}. The homogeneous normal condition $u \cdot n = \curl(\phi) \cdot n = 0$ is incorporated as an essential boundary condition in the space $W$, see equation \eqref{eq::admiss}, whereas the boundary condition $ u \times n = \curl(\phi) \times n = 0$ is induced as a natural boundary condition in \eqref{eq::mixedstressstreamweakone}.
\end{remark}
\begin{theorem}\label{th::exstream} Let $d = 3$. There exists an unique solution $(\psi, \sigma, \lambda) \in W \times \Sigma \times S$ of the weak formulation \eqref{eq::mixedstressstreamweak} such that
  \begin{align*}
    \| \sigma \|_{\Hcd} + \| \psi \|_{H(\curl)} + \| \nabla \lambda \|_{L^2} \lesssim \| f \|_{L^2}.
  \end{align*}
  Further, the velocity solution $u := \curl(\psi) \in V $ is the solution of \eqref{eq::mixedstressstokesweak}.
\end{theorem}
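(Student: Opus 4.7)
The plan is to construct the solution of \eqref{eq::mixedstressstreamweak} directly from the (known) MCS solution of \eqref{eq::mixedstressstokesweak} and to transfer well-posedness via the exact de Rham sequence on the simply connected Lipschitz domain $\Omega$. For existence, I would first invoke the well-posedness of \eqref{eq::mixedstressstokesweak} from Section 4.3.1 of \cite{lederer2019mass} to obtain $(\sigma, u, p) \in \Sigma \times V \times Q$ with $\| \sigma \|_{\Hcd} + \| u \|_V \lesssim \| f \|_{L^2}$. Since $u$ is divergence-free in $H_0(\divergence)$ and $\Omega$ is simply connected, the de Rham complex yields some $\tilde \psi \in H_0(\curl, \Omega)$ with $\curl \tilde \psi = u$. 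To enforce the gauge I would let $\phi \in S$ be the unique Lax--Milgram solution of $\int_\Omega \nabla \phi \cdot \nabla \mu \, \dif x = \int_\Omega \tilde \psi \cdot \nabla \mu \, \dif x$ for all $\mu \in S$ and set $\psi := \tilde \psi - \nabla \phi$; since $\phi \in H_0^1$ has vanishing tangential gradient on $\partial\Omega$, $\nabla \phi \in W$, so $\psi \in W$, $\curl \psi = u$, and \eqref{eq::mixedstressstreamweakthree} holds by construction. Setting $\lambda := 0$, equation \eqref{eq::mixedstressstreamweakone} is \eqref{eq::mixedstressstokesweakone} rewritten with $u = \curl \psi$, and \eqref{eq::mixedstressstreamweaktwo} follows from \eqref{eq::mixedstressstokesweaktwo} tested with $v := \curl \phi \in V$, since $\divergence(\curl \phi) = 0$ annihilates the pressure contribution.

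For uniqueness, given a homogeneous ($f = 0$) solution $(\psi, \sigma, \lambda)$, I would test \eqref{eq::mixedstressstreamweakone} with $\tau = \sigma$, \eqref{eq::mixedstressstreamweaktwo} with $\phi = \psi$, and \eqref{eq::mixedstressstreamweakthree} with $\mu = \lambda$; subtracting and combining yields $\nu^{-1} \int_\Omega |\sigma|^2 \, \dif x = 0$, so $\sigma = 0$. Equation \eqref{eq::mixedstressstreamweakone} then reduces to $\langle \divergence(\tau), \curl \psi \rangle_V = 0$ for all $\tau \in \Sigma$, and the $\Sigma$--$V$ inf-sup condition from the MCS analysis in \cite{lederer2019mass} forces $\curl \psi = 0$. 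Simple connectedness and $\psi \in H_0(\curl)$ give $\psi = \nabla \phi_0$ for some $\phi_0 \in H_0^1$, and testing \eqref{eq::mixedstressstreamweakthree} with $\mu = \phi_0$ yields $\psi = 0$. Finally, $\nabla \lambda \in W$ for $\lambda \in S$, and taking $\phi = \nabla \lambda$ in \eqref{eq::mixedstressstreamweaktwo} (with the $\sigma$-term already zero) gives $\|\nabla \lambda\|_{L^2}^2 = 0$.

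The stability bound then assembles piece by piece: $\|\sigma\|_{\Hcd}$ is inherited from MCS stability; $\|\curl \psi\|_{L^2} = \|u\|_{L^2} \lesssim \|f\|_{L^2}$ is immediate from the construction; $\|\psi\|_{L^2}$ is controlled by the standard Friedrichs--Poincar\'e inequality on the gauged subspace $\{ \phi \in H_0(\curl) : \phi \perp_{L^2} \nabla S \}$, valid on simply connected Lipschitz domains; and $\|\nabla \lambda\|_{L^2} = 0$. The identification $u = \curl \psi$ with the MCS velocity holds by construction. The main obstacle I anticipate is the uniqueness step, specifically pairing the MCS $\Sigma$--$V$ inf-sup (to pass from $\sigma = 0$ to $\curl \psi = 0$) with the de Rham characterization of $\ker(\curl) \cap H_0(\curl)$ on a simply connected domain; both ingredients are classical in the respective literatures but must be explicitly imported and combined in the present mixed setting.
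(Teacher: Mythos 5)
Your proposal is correct, but it takes a genuinely different route from the paper. The paper proves well-posedness of \eqref{eq::mixedstressstreamweak} directly in the Brezzi framework: continuity of the forms, kernel ellipticity (where $\lambda=0$ follows by testing with $\nabla\lambda$ and $\|\divergence(\sigma)\|_{V'}\lesssim\|\sigma\|_{L^2}$ follows from a regular decomposition $u=\curl(\theta)+z$ of $V$), and an inf-sup condition obtained by combining a regular decomposition $\phi=\nabla\theta+z$ of $H_0(\curl)$ with Lemma 12 of \cite{lederer2019mass}; the identification $u=\curl(\psi)=u^{\mathrm{MCS}}$ is then a short separate uniqueness argument. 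You instead transfer well-posedness from the MCS system: existence by constructing $\psi$ as a gauged vector potential of the MCS velocity (with $\lambda=0$), uniqueness by hand ($\sigma=0$ from testing, then the Lemma-12-type inf-sup to force $\curl(\psi)=0$, then $\ker(\curl)\cap H_0(\curl)=\nabla H_0^1$ plus the gauge to get $\psi=0$, then $\phi=\nabla\lambda$ to get $\lambda=0$), and the stability bound assembled piecewise using the Friedrichs inequality on the gauged subspace. Your route is shorter, yields $\lambda=0$ and the identification with the MCS velocity for free, and avoids the regular-decomposition machinery; the paper's route does not presuppose MCS solvability for existence and, more importantly, produces exactly the continuity/kernel-ellipticity/inf-sup structure that Section 5 mimics at the discrete level, so it doubles as a template for the discrete analysis. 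One shared caveat: both arguments implicitly use full exactness of the boundary-conditioned de Rham complex (absence of harmonic Dirichlet fields, e.g.\ connected boundary) --- you through the existence of the $H_0(\curl)$ vector potential, the characterization of $\ker(\curl)\cap H_0(\curl)$ and the Friedrichs inequality on the gauged subspace, the paper through the bound $\|\nabla z\|_{L^2}\lesssim\|\curl(\phi)\|_{L^2}$ in the regular decomposition --- so this is a limitation of the hypotheses of the statement rather than a defect of either proof.
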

\begin{proof}
  The proof is based on the standard theory of mixed problems, see \cite{Boffi:book}. Continuity of the bilinear forms follows immediately with the Cauchy Schwarz inequality, the continuity of the duality bracket and that for all $\phi \in W$ there holds the estimate
  \begin{align*}
    \| \curl(\phi) \|^2_V = \|\curl(\phi)\|^2_{L^2} + \|\divergence (\curl(\phi))\|^2_{L^2} = \|\curl(\phi)\|^2_{L^2} \le \| \phi \|^2_{H(\curl)}.
  \end{align*}
  We continue with the kernel ellipticity. To this end let $(\sigma,\lambda) \in \Sigma \times S$ such that
  \begin{align} \label{eq::kernel}
    \langle \divergence(\sigma), \curl(\phi) \rangle_{V}  + \int_\Omega \phi \cdot \nabla \lambda  =0 \quad \forall \phi \in W.
  \end{align}
  In a first step we bound the norm of $\lambda$: Since $\nabla S \in W$, equation~\eqref{eq::kernel} shows
  \begin{align*}
    \| \nabla \lambda \|^2_{L^2} =  \int_\Omega \nabla \lambda \cdot \nabla \lambda \dif x = - \langle \divergence(\sigma), \curl(\nabla \lambda) \rangle_{V}= 0,
  \end{align*}
 thus $\lambda = 0$ (due to the zero boundary conditions of $\lambda$). Next, let $u \in V$ be arbitrary. Then, using a regular decomposition of $u$ there exist functions $\theta,z \in H_0^1(\, \rr^d)$ such that $u = \curl(\theta) + z$ and
  \begin{align*}
    \| \theta\|_{H^1}  +  \| z\|_{H^1} \lesssim \| u \|_{V}.  
  \end{align*}
  Many authors have stated such decomposition results under various assumptions on the domain $\Omega$. Under the current assumptions we refer for example to \cite{DemloHiran14}. With the decomposition result we then get
  \begin{align*}
    \| \div(\sigma)\|_{V'} &= \sup\limits_{u \in V}\frac{\langle \div(\sigma), u \rangle_V}{\|u \|_V} \sim \sup\limits_{\theta,z \in H_0^1(\Omega, \rr^d)}\frac{\langle \div(\sigma), \curl(\theta) + z \rangle_V}{\|z \|_{H^1} + \|\theta \|_{H^1}}.
  \end{align*}
  Since $H_0^1(\Omega, \rr^d) \subset W$, equation~\eqref{eq::kernel} and $\lambda = 0$ implies that $\langle \div(\sigma), \curl(\theta) \rangle_V = 0$. By the definition of the distributional divergence and the Cauchy Schwarz inequality this then finally leads to
    \begin{align*}
              \| \div(\sigma)\|_{V'} &\sim  \sup\limits_{z \in H_0^1(\Omega , \rr^d)}\frac{\langle \div(\sigma), z \rangle_V}{\|z \|_{H^1}}
                           = \sup\limits_{z \in H_0^1(\Omega, \rr^d)}\frac{-\int_\Omega \sigma : \nabla z \dif x }{\|z \|_{H^1}}\le \| \sigma\|_{L^2},
    \end{align*}
    thus in total $\|\sigma \|^2_{\Hcd} + \|\nabla \lambda \|_{L^2}^2 \lesssim \int_\Omega \sigma : \sigma \dif x$, which proves kernel ellipticity.

    It remains to prove the inf-sup condition. To this end let $\phi \in W$ be arbitrary. Then, as before, we use a regular decomposition (now in $H_0(\curl)$) of $\phi$ to find functions $\theta \in H_0^1(\Omega, \rr)$ and $z \in H_0^1(\Omega, \rr^d)$ such that $\phi = \nabla \theta + z$ and
  \begin{align} \label{eq::regdccurl}
    \| \nabla z\|_{L^2}  \le c_1 \| \curl(\phi) \|_{L^2} \quad \textrm{and} \quad \| \nabla \theta \|_{L^2}  \le c_2  \| \phi \|_{W},
  \end{align}
  where $c_1>0$ and $c_2>0$ are two constants. With the triangle inequality we then also have $\| \nabla \theta\|_{L^2}^2 +  \| \nabla z \|_{L^2}^2 \sim  \| \phi \|^2_{W}$. Next, we use Lemma 12 of \cite{lederer2019mass} which states that for every $ u \in V$ there exists a $\sigma \in \Sigma$ such that $\langle \divergence(\sigma), u \rangle_V \ge c_3 \| u \|_V^2$, with a constant $c_3 > 0$, and $\| \sigma \|_{\Hcd} \lesssim \| u \|_V$. Since $\curl(\phi) \in V$, this gives for $\sigma_1 = \frac{c_1}{c_3} \sigma$ and $\lambda = \theta$, the estimate
  \begin{align*}
    \langle \divergence(\sigma_1), \curl(\phi) \rangle_V + \int_\Omega \phi \cdot \nabla \lambda \dif x
    &= \frac{c_3}{c_1} \langle \divergence(\sigma), \curl(\phi) \rangle_V + \int_\Omega \phi \cdot \nabla \theta \dif x \\
    &\ge c_1 \| \curl(\phi) \|^2_{L^2} + \int_\Omega \nabla \theta \cdot \nabla \theta \dif x + \int_\Omega z \cdot \nabla \theta \dif x \\
    &\ge  \| \nabla z \|^2_{L^2} + \| \nabla \theta \|_{L^2}^2 -  \| \nabla z \|_{L^2}\| \nabla \theta \|_{L^2}   \gtrsim \|\phi\|^2_W,
  \end{align*}
  where we used the Cauchy Schwarz inequality, the left equation of \eqref{eq::regdccurl} and the Young inequality for the last term. The continuity estimates
$\| \sigma_1 \|_{\Hcd} \lesssim \|\curl(\phi)\|_{L^2}$ and the right equation of \eqref{eq::regdccurl} (for $\lambda = \theta$) then shows
  \begin{align*}
   \langle \divergence(\sigma_1), \curl(\phi) \rangle_V + \int_\Omega \phi \cdot \nabla \lambda \dif x\gtrsim \|\phi\|_W (\| \sigma_1 \|_{\Hcd} + \| \nabla \lambda \|_{L^2}),
  \end{align*}
thus the inf-sup condition is proven and we conclude the proof of the existence and continuity result.

    Now let $(\sigma^{\operatorname{MCS}}, u^{\operatorname{MCS}}, p^{\operatorname{MCS}}) \in \Sigma \times V \times Q$ be the solution of equation \eqref{eq::mixedstressstokesweak}, and set $u := \curl(\psi) \in V$. As \eqref{eq::mixedstressstokesweakthree} gives $\div(u^{\operatorname{MCS}})=0$, it follows that the pair $(\sigma^{\operatorname{MCS}}, u^{\operatorname{MCS}})$ is uniquely defined by testing equation \eqref{eq::mixedstressstokesweaktwo} only with divergence free test functions $v \in V^0:=\{v \in V: \divergence(v) = 0\}$. Since the pair $(\sigma^{\operatorname{MCS}}, u)$ is also a solution of \eqref{eq::mixedstressstokesweaktwo} (on $V^0$), we have that $u = u^{\operatorname{MCS}}$ by the uniqueness of the solution of equation \eqref{eq::mixedstressstokesweak}.
  \end{proof}
  
  For the case $d=2$, a similar formulation can be derived. Now, the de Rham complex motivates the existence of a scalar potential $\psi \in H^1_0(\Omega, \rr)$ such that $\curl(\psi) = u$. Note, that $\psi$ is already uniquely defined thus no further gauging is needed. With the same observations as for the three dimensional case, we then have the weak formulation: Find $(\psi, \sigma) \in S \times \Sigma$ such that
\begin{subequations}
  \begin{alignat}{2}
    \int_\Omega \frac{1}{\nu} \sigma: \tau \dif x  + \langle{ \divergence(\tau),  \curl(\psi)}\rangle_{V}  & = 0&&    \quad \forall \tau \in \Sigma, \label{eq::mixedstressstreamweakonetwo}  \\
    \langle \divergence(\sigma), \curl(\phi) \rangle_{V} & = -\int_\Omega  f \cdot \curl(\phi) \dif x    &&    \quad \forall \phi \in S.  \label{eq::mixedstressstreamweaktwotwo}
\end{alignat}
\label{eq::mixedstressstreamweaktwod} 
\end{subequations}
\begin{theorem} Let $d = 2$. There exists a unique solution $(\psi, \sigma) \in S \times \Sigma$ of the weak formulation \eqref{eq::mixedstressstreamweaktwod} such that
  \begin{align*}
    \| \sigma \|_{\Hcd} + \| \psi \|_{H(\curl)} \lesssim \| f \|_{L^2}.
  \end{align*}
    Further, the velocity solution $u := \curl(\psi) \in V $ is the solution of \eqref{eq::mixedstressstokesweak}.
\end{theorem}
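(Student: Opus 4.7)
The plan is to repeat the mixed-problem argument used for Theorem \ref{th::exstream}, but in the considerably simpler setting where no gauging multiplier is required, since $\psi \in S = H^1_0(\Omega, \rr)$ is already uniquely determined by $\curl(\psi) = u$ on a simply connected 2D domain. First I would check that the bilinear forms are well defined and continuous: for $\phi \in S$ one has $\div(\curl(\phi)) = 0$ and $\curl(\phi) \cdot n = \partial_\tau \phi = 0$ on $\partial \Omega$ (because $\phi$ vanishes on $\partial \Omega$), so $\curl(\phi) \in V$ with $\| \curl(\phi) \|_V = \| \nabla \phi \|_{L^2} \le \| \phi \|_{H^1}$. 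The duality bracket $\langle \div(\tau), \curl(\phi) \rangle_V$ is therefore meaningful, and Cauchy--Schwarz together with the continuity of the duality bracket yields continuity of both bilinear forms on $\Sigma \times S$.

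For kernel ellipticity I assume $\sigma \in \Sigma$ satisfies $\langle \div(\sigma), \curl(\phi) \rangle_V = 0$ for all $\phi \in S$, and I want to bound $\| \sigma \|_{\Hcd}$ by $\| \sigma \|_{L^2}$. Exactly as in the 3D proof, I would invoke a regular decomposition of $V$ in two space dimensions (see \cite{DemloHiran14}): every $u \in V$ admits a splitting $u = \curl(\theta) + z$ with $\theta \in H^1_0(\Omega, \rr) = S$ and $z \in H^1_0(\Omega, \rr^2)$ and $\| \theta \|_{H^1} + \| z \|_{H^1} \lesssim \| u \|_V$. Since $\theta \in S$, the assumed kernel identity eliminates the $\curl(\theta)$ contribution, and the distributional definition of the divergence for $\sigma \in \Sigma$ reduces everything to $\langle \div(\sigma), z \rangle_V = -\int_\Omega \sigma : \nabla z \dif x$, whence $\| \div(\sigma) \|_{V'} \lesssim \| \sigma \|_{L^2}$ and the desired coercivity on the kernel follows.

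For the inf-sup condition, given an arbitrary $\phi \in S$ I would apply Lemma 12 of \cite{lederer2019mass} to the element $\curl(\phi) \in V$: it produces $\sigma_1 \in \Sigma$ with $\langle \div(\sigma_1), \curl(\phi) \rangle_V \gtrsim \| \curl(\phi) \|_V^2$ and $\| \sigma_1 \|_{\Hcd} \lesssim \| \curl(\phi) \|_V$. Since $\div(\curl(\phi)) = 0$, one has $\| \curl(\phi) \|_V = \| \nabla \phi \|_{L^2}$, and the Poincar\'e inequality on $H^1_0$ identifies this with $\| \phi \|_{H^1} = \| \phi \|_{H(\curl)}$, closing the inf-sup estimate. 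Combining with continuity and kernel ellipticity, Brezzi's theorem gives existence, uniqueness, and the asserted bound. Finally, to identify $u := \curl(\psi)$ with the MCS velocity I would mimic the end of the proof of Theorem \ref{th::exstream}: on a simply connected domain every $v \in V^0 = \{v \in V : \div(v) = 0\}$ can be written as $\curl(\phi)$ with $\phi \in S$, so $(\sigma, u)$ solves \eqref{eq::mixedstressstokesweaktwo} restricted to $V^0$, and uniqueness forces $u = u^{\operatorname{MCS}}$.

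The main obstacle I expect is purely bookkeeping: I must verify that the regular decomposition in 2D genuinely delivers $\theta$ in $H^1_0$ (so that $\theta$ is a legitimate test function in $S$) rather than merely $H^1$ with constant trace, as otherwise the kernel-ellipticity step does not absorb the $\curl(\theta)$ piece cleanly. This is the only place where the 2D argument differs substantively from its 3D counterpart; the remaining steps are genuinely lighter than in Theorem \ref{th::exstream} and should go through with little modification.
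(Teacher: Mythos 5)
Your proposal is correct and follows essentially the same route as the paper, which proves the $d=2$ case simply by repeating the steps of the proof of Theorem \ref{th::exstream} (continuity via $\|\curl(\phi)\|_V = \|\nabla\phi\|_{L^2}$, kernel ellipticity via a regular decomposition, inf-sup via Lemma 12 of \cite{lederer2019mass}, and the identification $u=u^{\operatorname{MCS}}$ through divergence-free test functions). The one point you flag — that the scalar potential $\theta$ can indeed be taken in $H^1_0$ — is unproblematic here: since $\Omega$ is simply connected its boundary is connected, so $\theta$, being constant on $\partial\Omega$, can be normalized to have zero trace.
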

\begin{proof}
  The proof follows with the same steps as in the proof of Theorem \ref{th::exstream}.
\end{proof}
\section{A Hellan-Herrmann-Johnson like method for the stream function formulation} \label{sec::discretisation}
In this section we present a new discretization of the weak formulation for the stream function with reduced regularity, see equation \eqref{eq::mixedstressstreamweak} and \eqref{eq::mixedstressstreamweaktwod}. In two space dimensions the method can be interpreted as a rotated version of the Hellan-Herrmann-Johnson (HHJ) formulation for fourth order problems, see \cite{hellan,herrmann,johnson, comodi,babuska}.

We start with some preliminaries for the discrete setting. Let $\mesh$ be a shape regular quasi uniform triangulation (partition) of the domain $\Omega$, which consists of triangles and tetrahedrons in two and three dimensions, respectively. We denote by $h$ the  maximum of the diameters of all elements in $\mesh$. Since $\mesh$ is quasi uniform we have $h \approx \textrm{diam}(T)$ for any $ T \in \mesh$. 
The set of element interfaces and boundaries is given by $\facets$.
On each facet $F \in \facets$ we denote by $\jump{\cdot}$ the usual jump operator. For facets on the boundary the jump operator is just the identity.

For readability, we use again the symbol $n$ for the outward unit normal vector on the element boundaries and on facets on the global boundary. Then, the normal and tangential trace of a smooth enough function $\phi: \Omega \rightarrow \rr^d$ is defined by 
\begin{align*}
 \phi_n := \phi \cdot n \quad \textrm{and} \quad \phi_t := \phi - \phi_n n.
\end{align*}
Note, that this definition gives a scalar normal trace and vector valued tangential trace. In two dimensions we further define by $t$ the unit tangent vector that is obtained by rotating $n$ anti-clockwise by 90 degrees (thus $t = n^\perp$), so that $\phi_t = (\phi \cdot t) t$.  In a similar manner for a smooth enough function $\phi: \Omega \to
 \rr^{d \times d}$ we set
\begin{align*}
  \phi_{nn} =   \phi : (n \otimes n) = n^\trans \phi n \quad \textrm{and} \quad \phi_{nt} =  \phi n -  \phi_{nn} n,
\end{align*}
which reads as a scalar ``normal-normal component'' and a vector-valued ``normal-tangential component''. In two dimensions we may write $\phi_{nt} = (t^\trans\phi n)t$.

Next, let $T \in \mesh$ be arbitrary, then we define by $\mathcal{P}^k(T, \rr)$ scalar-valued polynomials of total order $k$ on $T$. On the triangulation we then correspondingly set $\mathcal{P}^k(\mesh, \rr) := \cup_{T \in \mesh}\mathcal{P}^k(T, \rr)$. These definitions are extended to vector and tensor valued polynomials as before. 

Finally, let $\omega \subset \Omega$ be an arbitrary subset, then we use $(\cdot, \cdot)_\omega$ for the $L^2$-inner product on $\omega $ and by $|| \cdot ||^2_\omega := ( \cdot,\cdot)_\omega$ the corresponding norm.
\subsection{The HHJ-method in two dimensions}~\\
In this section we summarize the derivation of the HHJ-method for the stream function formulation as it is described in Section 4.1 in \cite{Girault:book}. In this section we only consider the case $d=2$.

The idea of the HHJ formulation is, similarly as in the derivation of the MCS formulation \eqref{eq::mixedstressstokesweak}, motivated by rewriting the fourth order problem of the stream function formulation \eqref{eq:stream} as a mixed system. To his end we introduce the space
\begin{align*}
  \Sigma^\hhj :=\{\sigma \in L^2(\Omega, \rr^{d\times d}): \sigma = \sigma^\trans, \sigma|_K \in H^1(T, \rr^{d \times d}) ~\forall T \in \mesh, \jump{\sigma_{nn}} = 0 ~\forall F \in \facets \},
\end{align*}
and define the symmetric tensor $\sigma := \nabla^2\psi \in \Sigma^\hhj$  where $\psi \in \Psi$ is the stream function. Then, with the introduction of the symmetric bilinear form
\begin{align*}
  a(\sigma,\tau) :=&  \frac{1}{\nu} \int_\Omega \sigma : \tau \dif x,
\end{align*}
and the bilinear form 
\begin{align}
  b^\hhj(\sigma,\phi) :=& -\sum\limits_{T \in \mesh} \int_T \sigma : \nabla^2\phi \dif x + \sum\limits_{F \in \facets }\int_{F} \sigma_{nn} \jump{(\nabla\phi)_n} \dif s \label{eq::hhjbblf} \\
   =& \sum\limits_{T \in \mesh} \int_T \divergence(\sigma) \cdot \nabla\phi \dif x - \sum\limits_{F \in \facets }\int_{F} \jump{\sigma_{nt}} \cdot (\nabla\phi)_t \dif s, \nonumber
\end{align}
an equivalent formulation of equation~\eqref{eq:stream} is given by: Find $(\sigma,\psi) \in \Sigma \times \Psi$ such that
  \begin{alignat*}{2}
    a(\sigma,\tau) + b^\hhj(\tau,\psi) & = 0 &&    \quad \forall \tau \in \Sigma^\hhj, \\
    b^\hhj(\sigma,\phi) & = -\int_\Omega  f \cdot \curl(\phi) \dif x    &&    \quad \forall \phi \in \Psi. 
\end{alignat*}
In order to derive the discrete HHJ method we now define the approximation spaces
\begin{align} \label{def::dischhjspaces}
  S^k_h := \mathcal{P}^k(\mesh, \rr) \cap S  \quad \textrm{and} \quad  \Sigma^{\hhj, k-1}_h := \mathcal{P}^{k-1}(\mesh, \rr^{d \times d}) \cap \Sigma^\hhj,
\end{align}
then the HHJ method is given by: Find $(\sigma_h,\psi_h) \in \Sigma_h^{\hhj, k-1} \times S^k_h$ such that
\begin{subequations}
  \begin{alignat}{2}
    a(\sigma_h,\tau_h) + b^\hhj(\tau_h,\psi_h) & = 0 &&    \quad \forall \tau_h \in \Sigma_h^{\hhj,k-1}, \label{eq::dischhjone}  \\
    b^\hhj(\sigma_h,\phi_h) & = -\int_\Omega  f \cdot \curl(\phi_h) \dif x    &&    \quad \forall \phi_h \in S^k_h.  \label{eq::dischhjtwo}
\end{alignat}
\label{eq::dischhj} 
\end{subequations}
Whereas this method gives optimal convergence orders, it is not clear how it can be extended to the three dimensional case. As stated in \cite{Girault:book}, \textit{``the obvious reason is, that the conditions determining the vector potential are more intricate than those defining the two-dimensional stream function''}. 
\subsection{A HHJ-like method}
We now introduce a new discrete method for the discretization of equation \eqref{eq::mixedstressstreamweak} and \eqref{eq::mixedstressstreamweaktwod}.

We start with the case $d = 2$. In contrast to the previous section where $\sigma^\hhj$ was the symmetric hessian of $\psi$, we now aim to approximate the matrix $\sigma = "\nabla \curl(\psi)"$. For the approximation of $\sigma \in \Sigma \subset H(\curl \div)$ we follow the works \cite{Lederer:2019b, Lederer:2019c, lederer2019mass} and define the discrete stress space
\begin{align*}
  \Sigma^k_h := \{ \sigma_h \in \mathcal{P}^k(\mesh, \rr^{d \times d}): \tr(\sigma_h) = 0, &\jump{(\sigma_h)_{nt}} = 0, \\ &~~(\sigma_h)_{nt} \in \mathcal{P}^{k-1}(F, \rr^{d-1})   ~ \forall F \in \facets \}.
\end{align*}
Note, that the discrete space $\Sigma^k_h$ is slightly non-conforming with respect to $H(\curl \divergence)$. Now let $S^k_h$ be as in \eqref{def::dischhjspaces} and define the velocity space as
\begin{align*}
  V^k_h := \mathcal{P}^k(\Omega, \rr^d) \cap H_0(\divergence, \Omega),
\end{align*}
which is the well known $H(\divergence)$-conforming Brezzi-Douglas-Marini space, see for example in \cite{Boffi:book}. In \cite{Lederer:2019b,lederer2019mass} the authors motivated the definition of a discrete duality bracket which mimics $\langle \divergence(\sigma), v \rangle_{H_0(\divergence)}$ in the case where $\sigma \in \Sigma^k_h \not \subset H(\curl \divergence)$. Following these ideas, we define for all $\sigma_h \in \Sigma^k_h$ and all $v_h \in V^k_h$ the bilinear form
\begin{align*}
  b(\sigma_h, v_h) := & -\sum\limits_{T \in \mesh} \int_T \sigma : \nabla v_h \dif x + \sum\limits_{F \in \facets }\int_{F} (\sigma_h)_{nt} \jump{(v_h)_t} \dif s \\
  =& \sum\limits_{T \in \mesh} \int_T \divergence(\sigma_h) \cdot v_h \dif x - \sum\limits_{F \in \facets }\int_{F} \jump{\sigma_{nn}} \cdot (v_h)_n \dif s.
\end{align*}
Similarly as in the continuous setting, the properties of the discrete de Rham complex now give $\curl(S^k_h) \subset V^{k-1}_h$. Thus for a function $\phi_h \in S^k_h$ and a $\sigma_h \in \Sigma^k_h$, the discrete duality bracket then reads as
\begin{align} \label{eq:discbblf}
  b(\sigma_h, \curl (\phi_h)) := & -\sum\limits_{T \in \mesh} \int_T \sigma_h : \nabla \curl (\phi_h) \dif x + \sum\limits_{F \in \facets }\int_{F} (\sigma_h)_{nt} \jump{\curl (\phi_h)_t} \dif s \\
 =&  \sum\limits_{T \in \mesh} \int_T \div(\sigma_h) \cdot \curl (\phi_h) \dif x - \sum\limits_{F \in \facets }\int_{F} \jump{(\sigma_h)_{nn}} \curl( \phi_h)_n \dif s.
\end{align}
Comparing this to the definition of the bilinear form $b^\hhj(\cdot, \cdot)$, see equation \eqref{eq::hhjbblf}, we realize that $b(\cdot, \cdot)$ reads, simply said, as a rotated version: Whereas a $\sigma_h^{\hhj,k} \in \Sigma_h^\hhj$ is ``normal-normal'' continuous and $\nabla \phi_h$ is tangential continuous, we are now in the setting where $\sigma_h \in \Sigma^k_h$ is ``normal-tangential'' continuous and $\curl(\phi_h)$ is normal continuous. With respect to the weak formulation \eqref{eq::mixedstressstreamweaktwod} we now define the discrete method: Find $(\sigma_h,\psi_h) \in \Sigma^{k-1}_h \times S^k_h$ such that
\begin{subequations}
  \begin{alignat}{2}
    a(\sigma_h,\tau_h) + b(\tau_h, \curl (\psi_h)) & = 0 &&    \quad \forall \tau_h \in \Sigma^{k-1}_h, \label{eq::dischhjrotone}  \\
    b(\sigma_h,\curl( \phi_h)) & = -\int_\Omega  f \cdot \curl(\phi_h) \dif x    &&    \quad \forall \phi_h \in S^k_h.  \label{eq::dischhjrottwo}
\end{alignat}
\label{eq::dischhjrot} 
\end{subequations}
\begin{remark}
Note, that on one edge $F \in \facets$ the normal-tangential components of functions in $\Sigma^{k-1}_h$ are polynomials of order $k-2$, whereas the normal-normal components of functions in $\Sigma^{\hhj, k-1}_h$ are polynomials of order $k-1$ resulting in less coupling degrees of freedom.
\end{remark}
Considering the close relation of equation \eqref{eq::mixedstressstreamweak} and equation \eqref{eq::mixedstressstreamweaktwod} in the continuous setting, the derivation of the two dimensional (rotated) HHJ-like method motivates to extend it also to the three dimensional case. Following \cite{Boffi:book}, the tangential continuous N\'{e}d\'{e}lec finite element space is given by
\begin{align*}
  W^k_h := \mathcal{P}^k(\Omega, \rr^d) \cap H_0(\curl, \Omega),
\end{align*}
with the property $\curl(W^k_h) \subset V^{k-1}_h$. Similarly as in the continuous setting, let the gauging bilinear form be given by
\begin{align*}
  b^g(\phi_h, \lambda_h) = \int_\Omega \phi_h \cdot \nabla \lambda_h \dif x \quad \forall \phi_h \in W^k_h, \forall \lambda_h \in S^{k+1}_h.
\end{align*}
Then, the three dimensional modified HHJ method is given by: Find $(\sigma_h, \psi_h, \lambda_h) \in \Sigma^{k-1}_h \times W^k_h \times S^{k+1}_h$ such that
\begin{subequations}
  \begin{alignat}{2}
    a(\sigma_h,\tau_h) + b(\tau_h,\curl( \psi_h)) & = 0 &&    \quad \forall \tau_h \in \Sigma^{k-1}_h,\\
    b(\sigma_h,\curl( \phi_h)) + b^g(\phi_h, \lambda_h) & = -\int_\Omega  f \cdot \curl(\phi_h) \dif x    &&    \quad \forall \phi_h \in W^k_h, \\
b^g(\psi_h, \mu_h) & = 0    &&    \quad \forall \mu_h \in S^{k+1}_h.
\end{alignat}
\label{eq::dischhjrotthree} 
\end{subequations}
\begin{remark}
  Whereas the discrete velocity of a standard mixed finite element approximation of the Stokes equations might only be discretely divergence-free, the velocity $u_h := \curl (\psi_h)$, where $\psi_h$ is the solution of \eqref{eq::dischhjrotthree}, is exactly divergence-free, see also Remark \ref{rem::pressurerob}.
\end{remark}
\begin{remark} \label{rem::nograds}
  Testing the second equation of \eqref{eq::dischhjrotthree} with the test function $\phi_h = \nabla \lambda_h$ shows, that the Lagrangian multiplier $\lambda_h$ equals zero, thus is only needed to prove discrete stability of the system. As discussed in the next section, discrete stability can also be proven on the space
  \begin{align} \label{eq::orthcomp}
  W^{k,\perp}_h := \{ \phi_h \in W^k_h: \int_\Omega \phi_h \cdot \nabla \mu_h \dif x = 0~ \forall \mu_h \in S^{k+1}_h \}.
  \end{align}
  Then we have the problem: Find $(\sigma_h, \psi_h) \in \Sigma^{k-1}_h \times W_h^{k,\perp}$ such that
\begin{subequations}
  \begin{alignat}{2}
    a(\sigma_h,\tau_h) + b(\tau_h,\curl( \psi_h)) & = 0 &&    \quad \forall \tau_h \in \Sigma^{k-1}_h,\\
    b(\sigma_h,\curl (\phi_h)) & = -\int_\Omega  f \cdot \curl(\phi_h) \dif x    &&    \quad \forall \phi_h \in W_h^{k,\perp}.
\end{alignat}
\label{eq::dischhjrotthreenograd}
\end{subequations}
Note, that the solutions $\sigma_h$ and $\psi_h$ of \eqref{eq::dischhjrotthree} and \eqref{eq::dischhjrotthreenograd} are identical. For the implementation of \eqref{eq::dischhjrotthreenograd}, the corresponding finite element code needs a basis of $W^{k,\perp}_h$. This can be achieved if the finite element spaces are constructed with respect to the discrete de Rham complex. For the high order moments see for example in \cite{zaglmayr2006high}. We also want to mention, that $W^{k,\perp}_h$ has less coupling degrees of freedoms compared to $W^k_h$, thus the factorization step to solve the corresponding linear system is faster.
\end{remark}
\section{A stability analysis in mesh dependent norms}
In this section we present a stability and error analysis of the HHJ-like method introduced in the last chapter. The analysis is based on using mesh dependent norms as for example in \cite{babuska} and \cite{Stenberg1988}. We only proof the three dimensional case since it is more challenging due to the gauging bilinear form. The two dimensional case follows with similar techniques.

In contrast to the continuous stability analysis of Section \ref{sec::redregstreamform}, we aim to use the $L^2$-norm on the discrete space $\Sigma^k_h$ and use again $\| \nabla \cdot \|_{L^2}$ on $S^k_h$. For the stream function and the velocity space we define for all $\phi_h \in W^k_h$ and $u_h \in V^k_h$ the norms
\begin{align*}
  | \phi_h |^2_{1,\curl,h} &:= \sum\limits_{T \in \mesh} \|\nabla \curl(\phi_h)\|_T^2 + \sum\limits_{F \in \facets} \frac{1}{h} \| \jump{  \curl(\phi_h)_t }\|_{F}^2, \\
  \| \phi_h \|^2_{1,\curl,h} &:= \| \phi_h \|^2_{L^2}+| \phi_h |^2_{1,\curl,h}, \\
    \| u_h \|^2_{1,h} &:= \sum\limits_{T \in \mesh} \|\nabla u_h\|_T^2 + \sum\limits_{F \in \facets} \frac{1}{h} \| \jump{  (u_h)_t }\|_{F}^2.
\end{align*}
Note, that the norm $\| \phi_h \|^2_{1,\curl,h}$ reads as a discrete $H^1(\curl)$-like norm, and that for $u_h = \curl(\psi_h)$ we have $| \psi_h |_{1,\curl,h} = \| u_h \|_{1,h}$. For the stability proof we will need the following decomposition and norm equivalence results.
\begin{lemma} \label{lem::normeqsigma}
  Let $\sigma_h \in \Sigma_h^{k-1}$ be arbitrary. There holds the norm equivalence
  \begin{align*}
    \| \sigma_h \|_{L^2}^2 \sim \sum\limits_{T \in \mesh} \| \sigma_h \|_T^2 + \sum\limits_{F \in \facets} h \| (\sigma_h)_{nt} \|_F^2.
  \end{align*}
\end{lemma}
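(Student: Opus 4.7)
The upper bound $\| \sigma_h \|_{L^2}^2 \le \sum_{T \in \mesh} \| \sigma_h \|_T^2 + \sum_{F \in \facets} h \| (\sigma_h)_{nt} \|_F^2$ is immediate: since the elements $T \in \mesh$ form a partition of $\Omega$ we have the identity $\| \sigma_h \|_{L^2}^2 = \sum_{T \in \mesh} \| \sigma_h \|_T^2$, and the facet contribution is non-negative. So the only content of the lemma is the reverse estimate
\begin{align*}
\sum_{F \in \facets} h \| (\sigma_h)_{nt} \|_F^2 \lesssim \sum_{T \in \mesh} \| \sigma_h \|_T^2,
\end{align*}
from which the equivalence follows, using once again that the right-hand side equals $\| \sigma_h \|_{L^2}^2$.

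The plan to establish this estimate is to use a standard discrete trace inequality combined with a scaling argument on the reference element. Since $\sigma_h|_T \in \mathcal{P}^{k-1}(T, \rr^{d \times d})$ lies in a finite-dimensional space whose dimension depends only on $k$ and $d$, the shape regularity of $\mesh$ yields, for each element $T$ and each face $F \subset \partial T$, the bound
\begin{align*}
\| \sigma_h n \|_F^2 \lesssim h^{-1} \| \sigma_h \|_T^2,
\end{align*}
where the hidden constant depends only on $k$, $d$, and the shape-regularity constant of $\mesh$. Pointwise on $F$ the tangential-normal component $(\sigma_h)_{nt} = \sigma_h n - ((\sigma_h)_{nn}) n$ is an orthogonal projection of $\sigma_h n$, so $|(\sigma_h)_{nt}| \le |\sigma_h n|$, hence $\| (\sigma_h)_{nt} \|_F^2 \le \| \sigma_h n \|_F^2 \lesssim h^{-1} \| \sigma_h \|_T^2$.

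It remains to sum over $F \in \facets$. Each interior facet bounds two elements and each boundary facet bounds one, and $(\sigma_h)_{nt}$ is single-valued across interior facets by the definition of $\Sigma^k_h$, so we may pick either side when applying the above trace estimate. Multiplying by $h$ and summing, using that each element has at most $d+1$ faces, yields
\begin{align*}
\sum_{F \in \facets} h \| (\sigma_h)_{nt} \|_F^2 \lesssim \sum_{T \in \mesh} \| \sigma_h \|_T^2,
\end{align*}
which completes the argument. There is no real obstacle here; the only point that deserves a brief mention is that the trace inequality constant depends on the polynomial degree, which is admissible because $k$ is fixed throughout the analysis.
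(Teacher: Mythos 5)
Your argument is correct and is essentially the paper's approach: the paper simply cites ``standard scaling arguments'' (and the reference where this was proven), and your proof spells those out — the trivial direction via the element partition, and the reverse bound for the facet term via the polynomial discrete trace inequality obtained by scaling to the reference element, with constants depending only on the fixed degree $k$, $d$, and shape regularity. No gap.
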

\begin{proof}
This follows by standard scaling arguments and was proven in \cite{Lederer:2019b}.
\end{proof}
\begin{lemma} \label{lem::normeqphi}
  Let $\phi_h \in W^k_h$, then there exists a $\lambda_h \in S^{k+1}_h$ and a $w_h \in W^k_h$ such that $\phi_h = \nabla \lambda_h + w_h$ and
  \begin{align} \label{eq::decompest}
    \| \nabla \lambda_h \|_{L^2} \lesssim \| \phi_h \|_{L^2} \quad \textrm{and} \quad \| w_h \|_{L^2} \lesssim \| \curl(\phi_h) \|_{L^2}.
  \end{align}
  Further, there holds the norm equivalence
  \begin{align} \label{eq::discnormequi}
    | \phi_h |^2_{1,\curl,h} + \| \nabla \lambda_h \|_{L^2} \sim \| \phi_h \|^2_{1,\curl,h}.
  \end{align}
\end{lemma}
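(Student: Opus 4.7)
The plan is to construct $\lambda_h$ via a discrete Poisson problem and let $w_h$ be the residual. Concretely, for given $\phi_h \in W^k_h$, define $\lambda_h \in S^{k+1}_h$ as the unique solution of
\begin{equation*}
\int_\Omega \nabla \lambda_h \cdot \nabla \mu_h \, dx = \int_\Omega \phi_h \cdot \nabla \mu_h \, dx \qquad \forall \mu_h \in S^{k+1}_h,
\end{equation*}
and set $w_h := \phi_h - \nabla \lambda_h$. The discrete de Rham inclusion $\nabla S^{k+1}_h \subset W^k_h$ (gradients of continuous Lagrange functions of order $k+1$ sit in the N\'ed\'elec space of order $k$, and the homogeneous boundary condition of $\lambda_h$ gives the vanishing tangential trace) guarantees $w_h \in W^k_h$, and by construction $w_h$ lies in the $L^2$-orthogonal complement $W_h^{k,\perp}$ from \eqref{eq::orthcomp}. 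Testing with $\mu_h = \lambda_h$ and applying Cauchy--Schwarz produces $\| \nabla \lambda_h \|_{L^2}^2 = \int_\Omega \phi_h \cdot \nabla \lambda_h \, dx \le \| \phi_h \|_{L^2} \| \nabla \lambda_h \|_{L^2}$, i.e.\ the first bound in \eqref{eq::decompest}.

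The second bound $\| w_h \|_{L^2} \lesssim \| \curl(\phi_h) \|_{L^2}$ is the deepest ingredient. Since $\curl(\nabla \lambda_h) = 0$, one has $\curl(w_h) = \curl(\phi_h)$, and $w_h \in W_h^{k,\perp}$ is $L^2$-orthogonal to gradients. The estimate then reads as the standard discrete Poincar\'e--Friedrichs inequality for the N\'ed\'elec complex on a simply connected Lipschitz domain, which I would invoke as a known result from discrete Hodge theory, citing \cite{monk2003finite,Boffi:book}. This is the main obstacle; once it is granted, the rest of the proof is routine.

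For the norm equivalence \eqref{eq::discnormequi} (reading the term as $\| \nabla \lambda_h \|_{L^2}^2$), the direction $|\phi_h|^2_{1,\curl,h} + \| \nabla \lambda_h \|_{L^2}^2 \lesssim \| \phi_h \|^2_{1,\curl,h}$ is immediate from the first estimate in \eqref{eq::decompest}. For the converse I use $\phi_h = \nabla \lambda_h + w_h$ and the triangle inequality to get $\| \phi_h \|_{L^2}^2 \lesssim \| \nabla \lambda_h \|_{L^2}^2 + \| w_h \|_{L^2}^2 \lesssim \| \nabla \lambda_h \|_{L^2}^2 + \| \curl(\phi_h) \|_{L^2}^2$, and it remains to bound $\| \curl(\phi_h) \|_{L^2}^2 \lesssim |\phi_h|^2_{1,\curl,h}$. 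This last estimate is a discrete broken-$H^1$ Poincar\'e inequality applied to $\curl(\phi_h) \in V^{k-1}_h \subset H_0(\div,\Omega)$: the normal component of $\curl(\phi_h)$ is continuous across facets and vanishes on $\partial\Omega$, hence $\jump{\curl(\phi_h)} = \jump{\curl(\phi_h)_t}$ on every facet, and the standard discrete Poincar\'e inequality for piecewise polynomial vector fields with vanishing normal boundary trace and penalised tangential jumps (Brenner-type) yields the claim. Adding $|\phi_h|^2_{1,\curl,h}$ on both sides concludes the equivalence.
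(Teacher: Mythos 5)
Your proposal is correct and follows essentially the same route as the paper: the same discrete Poisson construction of $\lambda_h$ with $w_h := \phi_h - \nabla\lambda_h$, Cauchy--Schwarz for the first bound, a cited discrete Friedrichs inequality for $H(\curl)$ (Monk) for $\| w_h \|_{L^2} \lesssim \| \curl(\phi_h) \|_{L^2}$, and a Brenner-type discrete Friedrichs inequality on the $H(\divergence)$-conforming space $V^{k-1}_h$ to bound $\| \curl(\phi_h) \|_{L^2} \lesssim | \phi_h |_{1,\curl,h}$ in the norm equivalence. Your reading of the term $\| \nabla\lambda_h \|_{L^2}$ in \eqref{eq::discnormequi} as squared matches the intended statement, and your extra remark that the full jump of $\curl(\phi_h)$ reduces to its tangential jump merely makes explicit why the cited discrete Poincar\'e applies.
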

\begin{proof}
  The first part of the lemma is well known in the literature but the proof is presented for completeness. First, we solve the problem: Find $\lambda_h \in S^{k+1}_h$ such that
  \begin{align*} 
    \int_\Omega \nabla \lambda_h \cdot \nabla \mu_h \dif x = \int_\Omega \phi_h \cdot \nabla \mu_h \dif x \quad \forall \mu_h \in S^{k+1}_h.
  \end{align*}
  Solveability of this problem is given by the standard theory, see for example \cite{brenner:book}, and there holds the regularity estimate $\| \nabla \lambda_h \|_{L^2} \lesssim \| \phi_h \|_{L^2}$. Now, since $w_h := \phi_h -\nabla \lambda_h \in W^k_h$ is $L^2$-orthogonal on $\nabla S^{k+1}_h$ (by definition), the estimate $\| w_h \|_{L^2} \lesssim \| \curl(\phi_h) \|_{L^2}$ follows by $\curl(\nabla \lambda_h) = 0$ and a Friedrichs-type inequlity for the $H(\curl)$-space, see for example \cite{monk2003finite}.

  We continue with the proof of \eqref{eq::discnormequi}. The left side can be bounded by the right side by applying the estimate for $\lambda_h$ in \eqref{eq::decompest}. For the other direction, the triangle inequality, and the right estimate in \eqref{eq::decompest} gives
  \begin{align*}
    \| \phi_h \|_{L^2}^2 &\le \| \nabla \lambda_h \|_{L^2}^2 + \| w_h \|_{L^2}^2 \lesssim  \| \nabla \lambda_h \|_{L^2}^2 + \| \curl(\phi_h) \|_{L^2}^2.
  \end{align*}
  With $u_h := \curl(\phi_h) \in V^{k-1}_h$, a discrete Friedrichs-like inequality on $V^{k-1}_h$, as in \cite{brenner_friedrich}, gives
  \begin{align*}
    \| \curl(\phi_h) \|_{L^2}^2 = \| u_h \|_{L^2}^2 \lesssim \| u_h \|_{1,h}^2 = \| \phi_h \|_{1,\curl,h}^2,
  \end{align*}
  from what we conclude the proof.
\end{proof}
To show discrete stability, we again apply the standard theory of mixed problems, see \cite{Boffi:book}. To this end we prove continuity, kernel ellipticity and inf-sup stability in the following.
\begin{lemma}[Continuity] \label{lem::disccont}
  There holds the continuity estimate
  \begin{alignat*}{2}
    a(\sigma_h,\tau_h) &\lesssim \frac{1}{\nu} \| \sigma_h\|_{L^2} \| \tau_h\|_{L^2} &&\quad \forall \sigma_h, \tau_h \in \Sigma^{k-1}_h \\
    b(\sigma_h,\curl (\phi_h)) &\lesssim \| \sigma_h\|_{L^2} \| \phi_h\|_{1,\curl,h} &&\quad \forall \sigma_h \in \Sigma^{k-1}_h , \forall\phi_h \in W^k_h \\
    b^g(\phi_h, \mu_h) &\lesssim  \| \phi_h\|_{1,\curl,h} \| \nabla \mu_h\|_{L^2} &&\quad \forall \phi_h \in W^{k}_h, \forall\mu_h \in S^{k+1}_h.
  \end{alignat*}
\end{lemma}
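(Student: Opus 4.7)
The plan is to dispatch the three estimates in increasing order of difficulty, each reducing essentially to a (possibly weighted) Cauchy--Schwarz inequality combined with the definitions of the mesh-dependent norms and the two decomposition/equivalence lemmas stated just above.

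First I would handle the bound on $a(\sigma_h,\tau_h)$, which is immediate: by the definition of $a(\cdot,\cdot)$ as a $\nu^{-1}$-weighted $L^2$-inner product of matrix fields, a single application of the Cauchy--Schwarz inequality over $\Omega$ gives $a(\sigma_h,\tau_h) \le \nu^{-1} \|\sigma_h\|_{L^2}\|\tau_h\|_{L^2}$.

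Next I would handle the bound on $b(\sigma_h,\curl(\phi_h))$, which is the main technical point. I would start from the first representation of $b(\cdot,\cdot)$ in \eqref{eq:discbblf}, namely
\begin{align*}
b(\sigma_h, \curl(\phi_h)) = -\sum_{T\in\mesh} \int_T \sigma_h : \nabla\curl(\phi_h)\dif x + \sum_{F\in\facets} \int_F (\sigma_h)_{nt} \jump{\curl(\phi_h)_t}\dif s,
\end{align*}
and estimate the two pieces separately. For the volume term, Cauchy--Schwarz on each $T$ followed by a discrete Cauchy--Schwarz over the elements yields $\sum_T \|\sigma_h\|_T \|\nabla \curl(\phi_h)\|_T \le \|\sigma_h\|_{L^2}\, |\phi_h|_{1,\curl,h}$. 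For the facet term I would insert the weights $h^{1/2}$ and $h^{-1/2}$, apply Cauchy--Schwarz on each facet and then a discrete Cauchy--Schwarz over $\facets$, obtaining the bound $\bigl(\sum_F h\|(\sigma_h)_{nt}\|_F^2\bigr)^{1/2}\bigl(\sum_F h^{-1}\|\jump{\curl(\phi_h)_t}\|_F^2\bigr)^{1/2}$. The second factor is exactly controlled by $|\phi_h|_{1,\curl,h}$ from the definition of the seminorm, while the first factor is bounded by $\|\sigma_h\|_{L^2}$ by the norm equivalence of Lemma \ref{lem::normeqsigma}. Summing the two contributions and using $|\phi_h|_{1,\curl,h}\le \|\phi_h\|_{1,\curl,h}$ gives the claimed estimate.

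Finally, for $b^g(\phi_h,\mu_h) = \int_\Omega \phi_h\cdot\nabla\mu_h\dif x$, a single Cauchy--Schwarz yields the bound by $\|\phi_h\|_{L^2}\|\nabla\mu_h\|_{L^2}$, and then $\|\phi_h\|_{L^2}\le \|\phi_h\|_{1,\curl,h}$ follows directly from the definition of the full norm. The only subtlety worth flagging is the facet term in the second estimate, where the choice of $h$-weighting must exactly match the $h$-scaling built into the norms on both sides; this is precisely what Lemma \ref{lem::normeqsigma} is tailored for, so no additional trace or inverse inequalities are needed.
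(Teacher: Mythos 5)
Your proposal is correct and follows essentially the same route as the paper: Cauchy--Schwarz for $a(\cdot,\cdot)$ and $b^g(\cdot,\cdot)$, and for $b(\cdot,\cdot)$ the first representation in \eqref{eq:discbblf} with elementwise and facetwise Cauchy--Schwarz (with the $h^{\pm 1/2}$ weighting) combined with the norm equivalence of Lemma \ref{lem::normeqsigma}. Your write-up simply spells out the details that the paper's proof leaves implicit.
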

\begin{proof}
The continuity of $a(\cdot,\cdot)$ and $b^g(\cdot,\cdot)$ follows by the Cauchy-schwarz inequality. For $b(\cdot,\cdot)$ we use the first representation in \eqref{eq:discbblf}, the Cauchy-Schwarz inequality on each element $T$ and $F$ separately, and the norm equivalence Lemma \ref{lem::normeqsigma}.
\end{proof}
\begin{lemma}[Kernel ellipticity]\label{lem::discell}
  Let $(\sigma_h,\lambda_h) \in \Sigma^{k-1}_h \times S^{k+1}_h$ be an element in the kernel of the constraints, i.e. $b( \sigma_h, \curl( \phi_h)) + b^g(\phi_h, \lambda_h) = 0$ for all $\phi_h \in W^k_h$. There holds the estimate
  \begin{align*}
    \| \sigma_h \|^2_{L^2} + \|\nabla \lambda_h \|^2_{L^2} \lesssim \nu a(\sigma,\sigma).
  \end{align*}
\end{lemma}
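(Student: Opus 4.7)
The plan is to mimic directly the continuous kernel-ellipticity argument from the proof of Theorem \ref{th::exstream}. There, the decisive step was that $\nabla \lambda$ is itself an admissible test function in the kernel equation, which forced $\lambda = 0$. In the discrete setting the corresponding fact is provided by the discrete de Rham complex: $\nabla S^{k+1}_h \subset W^k_h$, so $\phi_h := \nabla \lambda_h \in W^k_h$ is a legitimate test function in the kernel relation $b(\sigma_h,\curl(\phi_h)) + b^g(\phi_h,\lambda_h) = 0$.

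Substituting this choice, the first term vanishes. Indeed, $b(\sigma_h,\curl(\phi_h))$ depends on $\phi_h$ only through the velocity $v_h := \curl(\phi_h) \in V^{k-1}_h$ (cf.\ \eqref{eq:discbblf}), and here $v_h = \curl(\nabla \lambda_h) = 0$ identically. Only the gauging term survives, and it reduces to
\begin{align*}
0 = b^g(\nabla \lambda_h, \lambda_h) = \int_\Omega \nabla \lambda_h \cdot \nabla \lambda_h \dif x = \| \nabla \lambda_h \|^2_{L^2}.
\end{align*}
Hence $\nabla \lambda_h = 0$.

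With this established, the asserted estimate
\begin{align*}
\| \sigma_h \|^2_{L^2} + \| \nabla \lambda_h \|^2_{L^2} \lesssim \nu\, a(\sigma_h,\sigma_h)
\end{align*}
collapses to $\| \sigma_h \|^2_{L^2} \lesssim \nu\, a(\sigma_h,\sigma_h)$, which is in fact an equality by the definition $a(\sigma_h,\tau_h) = \frac{1}{\nu}(\sigma_h,\tau_h)_{L^2}$. I do not anticipate a genuine obstacle: once the discrete de Rham inclusion $\nabla S^{k+1}_h \subset W^k_h$ is invoked, and the fact that $b(\sigma_h,\cdot)$ sees its second argument only through its curl, the result drops out after testing with $\phi_h = \nabla\lambda_h$. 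This is the discrete counterpart of the identity $\curl \circ \nabla = 0$ already exploited in the continuous proof, and is the only structural ingredient that is truly needed.
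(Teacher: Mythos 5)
Your proposal is correct and follows essentially the same route as the paper's own proof: test the kernel relation with $\phi_h = \nabla\lambda_h \in W^k_h$, use $\curl(\nabla\lambda_h)=0$ so that the $b$-term drops out and only $b^g(\nabla\lambda_h,\lambda_h)=\|\nabla\lambda_h\|_{L^2}^2=0$ survives, then note $\|\sigma_h\|_{L^2}^2 = \nu\, a(\sigma_h,\sigma_h)$ by definition of $a$. No gaps.
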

\begin{proof}
  Since $\| \sigma_h \|^2_{L^2} = \nu a(\sigma,\sigma)$, we only prove the estimate for $\lambda_h$. 
  Now let $\phi_h:=\nabla \lambda_h \in W^k_h$. As $\curl (\phi_h) = 0$, the definition of the bilinear form $b(\cdot, \cdot)$ gives
  \begin{align*}
    \| \nabla \lambda_h \|^2_{L^2} & =  \int_\Omega \nabla \lambda_h \cdot \nabla \lambda_h \dif x = \int_\Omega \phi_h \cdot \nabla \lambda_h \dif x  \\
    & =   \sum\limits_{T \in \mesh} \int_T \sigma_h : \nabla \curl (\phi_h)  \dif x - \sum\limits_{F \in \facets }\int_{F} (\sigma_h)_{nt} \jump{\curl (\phi_h )_t} \dif s = 0,
  \end{align*}
which implies (due to the boundary conditions) that $\lambda = 0$, and the lemma is proven.
\end{proof}
\begin{lemma}[inf-sup]\label{lem::discinfsup}
  There holds the stability estimate
  \begin{align*}
    \sup \limits_{\substack{0 \neq\sigma_h \in \Sigma^{k-1}_h \\ 0 \neq \lambda_h \in S^{k+1}_h}} \frac{b( \sigma_h, \curl \phi_h) + b^g(\phi_h, \lambda_h)}{\|\sigma_h \|_{L^2} + \| \nabla \lambda_h \|_{L^2}} \gtrsim \|\phi_h \|_{1,\curl,h} \quad \forall \phi_h \in W^k_h.
  \end{align*}
  
\end{lemma}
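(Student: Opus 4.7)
The plan is to mimic the continuous inf-sup proof in Theorem \ref{th::exstream}, replacing the regular decomposition of $H_0(\curl)$-functions by its discrete counterpart from Lemma \ref{lem::normeqphi}, and replacing the continuous stability result (Lemma 12 of \cite{lederer2019mass}) by its discrete analog on the MCS pair $(\Sigma^{k-1}_h, V^{k-1}_h)$, which is available from \cite{Lederer:2019b,lederer2019mass}.

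Let $\phi_h \in W^k_h$ be arbitrary. First, apply Lemma \ref{lem::normeqphi} to write $\phi_h = \nabla\theta_h + w_h$ with $\theta_h \in S^{k+1}_h$, $w_h \in W^k_h$, and
\[
\|\nabla\theta_h\|_{L^2} \lesssim \|\phi_h\|_{L^2}, \qquad \|w_h\|_{L^2} \lesssim \|\curl\phi_h\|_{L^2}.
\]
Next, observe that $u_h := \curl\phi_h \in V^{k-1}_h$ by the discrete de~Rham property, and that $|\phi_h|_{1,\curl,h} = \|u_h\|_{1,h}$. I would then invoke the discrete MCS stability result: there exists $\tilde\sigma_h \in \Sigma^{k-1}_h$ such that
\[
b(\tilde\sigma_h, u_h) \ge c_3\,\|u_h\|_{1,h}^2
\qquad \text{and} \qquad
\|\tilde\sigma_h\|_{L^2} \le c_4\, \|u_h\|_{1,h},
\]
with $c_3,c_4>0$ independent of $h$ and $\nu$.

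Now set $\sigma_h := \tilde\sigma_h$ and $\lambda_h := \theta_h$. Since $\phi_h = \nabla\theta_h + w_h$ and $\nabla\theta_h\cdot\nabla\theta_h = |\nabla\theta_h|^2$, expanding $b^g(\phi_h,\lambda_h)$ yields
\[
b(\sigma_h,\curl\phi_h) + b^g(\phi_h,\lambda_h)
\ge c_3\|u_h\|_{1,h}^2 + \|\nabla\theta_h\|_{L^2}^2 - \|w_h\|_{L^2}\|\nabla\theta_h\|_{L^2}.
\]
Bounding $\|w_h\|_{L^2} \lesssim \|u_h\|_{L^2} \le \|u_h\|_{1,h} = |\phi_h|_{1,\curl,h}$ and applying Young's inequality to the cross term absorbs it into the two positive squares, giving
\[
b(\sigma_h,\curl\phi_h) + b^g(\phi_h,\lambda_h) \gtrsim |\phi_h|_{1,\curl,h}^2 + \|\nabla\theta_h\|_{L^2}^2 \sim \|\phi_h\|_{1,\curl,h}^2,
\]
where the last equivalence is exactly \eqref{eq::discnormequi} of Lemma \ref{lem::normeqphi}. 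For the denominator, the discrete stability bound and the decomposition estimates give
\[
\|\sigma_h\|_{L^2} + \|\nabla\lambda_h\|_{L^2} \lesssim |\phi_h|_{1,\curl,h} + \|\phi_h\|_{L^2} \lesssim \|\phi_h\|_{1,\curl,h}.
\]
Dividing the two bounds yields the claimed inf-sup estimate.

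The main obstacle is locating and applying the correct discrete MCS stability statement: the analog of Lemma 12 of \cite{lederer2019mass} must provide a stress $\tilde\sigma_h \in \Sigma^{k-1}_h$ that tests the full broken $H^1$-like norm $\|u_h\|_{1,h}$ of $u_h = \curl\phi_h$, not merely its $L^2$-norm, since otherwise one cannot recover control of $|\phi_h|_{1,\curl,h}$. Apart from that, everything else is a direct translation of the continuous proof of Theorem \ref{th::exstream}, with Lemma \ref{lem::normeqphi} playing the role of the regular decomposition \eqref{eq::regdccurl} and Lemma \ref{lem::normeqsigma} guaranteeing that the discrete duality pair $b(\cdot,\curl\cdot)$ is controlled by $\|\sigma_h\|_{L^2}$.
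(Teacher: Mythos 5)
Your proposal follows essentially the same route as the paper's proof: the discrete inf-sup of the MCS pair (Lemma 6.5 in \cite{Lederer:2019b}, applicable since $\div(\curl\phi_h)=0$) supplies the stress testing $\|u_h\|_{1,h}$, the discrete decomposition of Lemma \ref{lem::normeqphi} supplies $\lambda_h$, and Young's inequality plus the norm equivalence \eqref{eq::discnormequi} finish the argument, exactly as in the paper. The only cosmetic differences are that the paper rescales $\sigma_h$ by a fixed constant before absorbing the cross term and spells out the discrete Friedrichs-type inequality $\|u_h\|_{L^2}\lesssim\|u_h\|_{1,h}$ that you use implicitly.
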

\begin{proof}
  Let $\phi_h \in W^k_h$ be arbitrary and set $u_h := \curl( \phi_h) \in V^{k-1}_h$. Since $\div u_h = 0$, Lemma 6.5 (and the norm equivalence (6.5)) in \cite{Lederer:2019b} shows, that there exists a function $\sigma_h \in \Sigma^{k-1}_h$ such that $\| \sigma_h \|_{L^2} \lesssim \|u_h \|_{1,h}$ and $b(\sigma,u_h) \ge c_1 \|u_h\|^2_{1,h}$, where $c_1>0$ is a fixed constant. Now let $\lambda_h \in S^{k+1}_h$ and $w_h \in W^k_h$ be the decomposition functions given by Lemma  \ref{lem::normeqphi}, such that $\| w_h \|_{L^2} \le c_2 \| \curl (\phi_h) \|_{L^2}$. As in the proof of Lemma~\ref{lem::normeqphi}, a Friedrichs-like inequality further gives $ \| u_h \|_{L^2} \le c_3 \|u_h\|_{1,h}$, thus 
  \begin{align} \label{whestimate}
    \| w_h \|_{L^2} \le c_2 \| \curl \phi_h \|_{L^2} \le c_2c_3 \|u_h\|_{1,h}.
  \end{align}
  Now with $\tilde \sigma_h := \frac{c_2c_3}{c_1} \sigma_h$ we get
  \begin{align*}
    b( \tilde \sigma_h, \curl (\phi_h)) + b^g(\phi_h, \lambda_h) &= b( \tilde \sigma_h, u_h) + \int_{\Omega} \phi_h \cdot \nabla \lambda_h \dif x \\
                                                        &= b(\tilde  \sigma_h, u_h) + \int_{\Omega} |\nabla \lambda_h|^2 - w_h \cdot \nabla \lambda_h \dif x \\
                                                               &\ge c_2 c_4\|u_h\|^2_{1,h} + \|\nabla \lambda_h\|_{L^2}^2 - \|w_h\|_{L^2}\|\nabla \lambda_h\|_{L^2}
  \end{align*}
  Using \eqref{whestimate}, Youngs inequality and $\| u_h \|_{1,h} = |\phi_h|_{1,\curl,h}$, finally gives
  \begin{align*}  
      b( \tilde \sigma_h, \curl (\phi_h)) + b^g(\phi_h, \lambda_h) &\ge c_2 c_4 \|u_h\|^2_{1,h} + \|\nabla \lambda_h\|_{L^2}^2 - c_2c_4 \|u_h\|_{1,h}\|\nabla \lambda_h\|_{L^2}\\
                                                               &\gtrsim |\phi_h|^2_{1,\curl,h} + \|\nabla \lambda_h\|_{L^2}^2.
  \end{align*}
With the continuity estimates $\| \sigma_h \|_{L^2} \lesssim |\phi_h|_{1,\curl,h}$, and the left estimate of equation~\eqref{eq::decompest}, we conclude the proof by the norm equivalence~\eqref{eq::discnormequi}.
\end{proof}
The above prove and the norm equivalence \eqref{eq::discnormequi} also shows, that we can derive an inf-sup condition on the orthogonal complement of $\nabla S^{k+1}_h$, i.e. there holds
\begin{align} \label{es::infsupsubset}
    \sup \limits_{0 \neq\sigma_h \in \Sigma^{k-1}_h} \frac{b( \sigma_h, \curl (\phi_h))}{\|\sigma_h \|_{L^2}} \gtrsim \|\phi_h \|_{1,\curl,h} \quad \forall \phi_h \in W^{k,\perp}_h.
\end{align}
see also Remark \ref{rem::nograds}.
\begin{theorem}[Consistency] \label{th::consistency}
  The HHJ-like method for the stream function is consistent in the following sense. If the exact solution of the Stokes problem \eqref{eq:mixedstokes} is such that $u\in H^1(\Omega, \rr^d)$, $\sigma \in H^1(\Omega, \rr^{d \times d})$, $p \in L^2_0$, and $\psi \in \Psi$ is the exact stream function such that $\curl(\psi) = u$, then
  \begin{align*}
    a(\sigma, \tau_h) + b(\tau_h, \curl (\psi) ) + b(\sigma, \curl ( \phi_h)) = -\int_\Omega f \cdot \curl ( \phi_h) \dif x,
  \end{align*}
  for all $\tau_h \in \Sigma^{k-1}_h, \phi_h \in W^k_h$.
\end{theorem}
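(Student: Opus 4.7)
The plan is to verify the two consistency identities separately, exploiting the extra regularity hypotheses on $\sigma$ and $u = \curl(\psi)$ to collapse the element-wise/facet-wise sums appearing in $b(\cdot,\cdot)$ back to global integrals, and then to invoke the strong form of the Stokes equations \eqref{eq:mixedstokes}.

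First, for the identity $a(\sigma,\tau_h) + b(\tau_h,\curl(\psi)) = 0$, I would use the first representation of $b(\cdot,\cdot)$ in \eqref{eq:discbblf}. Because $u = \curl(\psi) \in H^1(\Omega,\rr^d)$, the tangential trace $u_t$ is single-valued on every interior facet, so $\jump{\curl(\psi)_t} = 0$ on $F \in \facets$ (on boundary facets the homogeneous Dirichlet condition $u = 0$ gives the same). Hence
\begin{align*}
  b(\tau_h,\curl(\psi)) = -\sum_{T \in \mesh} \int_T \tau_h : \nabla \curl(\psi) \dif x = -\int_\Omega \tau_h : \nabla u \dif x.
\end{align*}
Next, since $\tau_h$ is trace-free, the MCS relation \eqref{eq:mixedstokesone} gives $\frac{1}{\nu}\int_\Omega \sigma:\tau_h \dif x = \frac{1}{\nu}\int_\Omega \dev(\sigma):\tau_h \dif x = \int_\Omega \nabla u : \tau_h \dif x$, and the two terms cancel.

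For the second identity $b(\sigma,\curl(\phi_h)) = -\int_\Omega f \cdot \curl(\phi_h) \dif x$, I would use the second representation of $b(\cdot,\cdot)$ in \eqref{eq:discbblf}. Since $\sigma \in H^1(\Omega,\rr^{d\times d})$, the normal-normal component $\sigma_{nn}$ is single-valued on interior facets, so $\jump{\sigma_{nn}} = 0$ there; on boundary facets one uses $\phi_h \in W^k_h \subset H_0(\curl)$, which by the identity \eqref{eq::admiss} yields $\curl(\phi_h)_n = 0$ on $\partial\Omega$. Both sources of facet contributions vanish, leaving
\begin{align*}
  b(\sigma,\curl(\phi_h)) = \int_\Omega \divergence(\sigma) \cdot \curl(\phi_h) \dif x.
\end{align*}
Substituting the momentum equation $\divergence(\sigma) = \nabla p - f$ from \eqref{eq:mixedstokestwo} yields
\begin{align*}
  b(\sigma,\curl(\phi_h)) = \int_\Omega \nabla p \cdot \curl(\phi_h) \dif x - \int_\Omega f \cdot \curl(\phi_h) \dif x,
\end{align*}
and the pressure contribution vanishes after integrating by parts once more, since $\curl(\phi_h) \in V = H_0(\div,\Omega)$ with $\div(\curl(\phi_h)) = 0$ by \eqref{eq::admiss}.

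The only genuinely delicate step is bookkeeping the boundary facets in $b(\sigma,\curl(\phi_h))$: the interior cancellation is automatic from $\sigma \in H^1$, but the disappearance of $\sigma_{nn} \curl(\phi_h)_n$ on $\partial\Omega$ and the vanishing of the pressure boundary term both rely on the normal trace identity $\curl(\phi_h)\cdot n = 0$ coming from $\phi_h \times n|_{\partial\Omega}=0$, i.e.\ on the essential part of the boundary condition built into $W^k_h$. Once this is spelled out, both identities follow directly and the sum gives the claimed consistency equation.
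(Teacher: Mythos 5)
Your proof is correct and follows essentially the same route as the paper: use the two representations of $b(\cdot,\cdot)$ to kill the facet sums via the regularity of $\sigma$ and $u=\curl(\psi)$, cancel $a(\sigma,\tau_h)$ against $-\int_\Omega \tau_h:\nabla\curl(\psi)\dif x$ using the trace-free structure, and insert $\divergence(\sigma)=\nabla p - f$ with the pressure term vanishing against the divergence-free, normal-trace-free field $\curl(\phi_h)$. If anything, you are slightly more careful than the paper on the boundary facets, where the term $\sigma_{nn}\,\curl(\phi_h)_n$ disappears because $\curl(\phi_h)\cdot n=0$ from $\phi_h\in H_0(\curl)$ rather than from any continuity of $\sigma_{nn}$.
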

\begin{proof}
  As the exact solutions $\sigma$ and $u = \curl (\psi)$ are continuous we have that $\jump{\sigma_{nn}} = 0$ and $\jump{\curl (\psi)_t} = 0$ on all faces $F \in \facets$, thus by definition \eqref{eq:discbblf}, we have
  \begin{align*}
    b(\sigma, \curl( \phi_h)) &= \sum\limits_{T \in \mesh} \int_T \div(\sigma) \cdot \curl (\phi_h) \dif x - \sum\limits_{F \in \facets }\int_{F} \jump{\sigma_{nn}} \curl (\phi_h)_n \dif s  \\
                            &= \int_\Omega \divergence(\sigma) \cdot \curl( \phi_h) \dif x, \\
      b(\tau_h, \curl (\psi)) = &- \sum\limits_{T \in \mesh} \int_T \tau_h : \nabla \curl (\psi) \dif x + \sum\limits_{F \in \facets }\int_{F} (\tau_h)_{nt} \jump{\curl (\psi)_t} \dif s \\
&=  -\int_\Omega \tau_h : \nabla \curl (\psi) \dif x.
  \end{align*}
  Now, since $\dev{\sigma} = \sigma = \nu \nabla u = \nu \nabla \curl (\psi)$ we have
  \begin{align*}
    a(\sigma, \tau_h) + b(\tau_h, \curl (\psi) ) =  \int_\Omega \frac{1}{\nu} \sigma : \tau_h \dif x -\int_\Omega \tau_h : \nabla \curl (\psi) \dif x = 0.
  \end{align*}
  With $\divergence(\sigma) = - f + \nabla p$, integration by parts for the pressure integral finally  gives
  \begin{align*}
     b(\sigma, \curl (\phi_h)) = \int_\Omega (-f + \nabla p) \cdot \curl(\phi_h) \dif x = -\int_\Omega f \cdot \curl (\phi_h) \dif x,
  \end{align*}
  where we used that $\curl(\nabla p) = 0$.
\end{proof}
\subsection{An error  analysis of the HHJ-like method} \label{sec::errorana}
The a priori error estimate presented in this section is based on the inf-sup stability and the consistency proven before. Further we need several interpolation results.

To this end let $I_{V^{k-1}_h}$ and $I_{W^k_h}$ be an $H(\divergence)$-conforming and $H(\curl)$-conforming (projection based) interpolation operator, respectively, as for example in \cite{MR1746160, Demkowicz2008}. Note, that these operators commute with the corresponding differential operators, i.e.
\begin{align} \label{eq::commintop}
  I_{V^{k-1}_h} \curl = \curl I_{W^k_h},
\end{align}
Further, let $I_{\Sigma^{k-1}_h}$ be the interpolation operator defined in \cite{Lederer:2019b}.
\begin{lemma}\label{lem::intop}
Let $u, \sigma$ be arbitrary with $u \in H^1(\Omega, \rr^d) \cap H^m(\mathcal{T}, \rr^d)$ and $\sigma \in H^1(\Omega, \rr^{d\times d}) \cap H^{m-1}(\mesh,\Omega^{d \times d})$. For $s = \min(m-1, k-1)$, there holds the approximation estimate
\begin{align*}
  \| \sigma - I_{\Sigma^{k-1}_h} \sigma \|_{L^2} + \sqrt{ \sum\limits_{F \in \facets} h \|( \sigma - I_{\Sigma^{k-1}_h} \sigma )_{nt} \|^2_{F}} &\lesssim h^s \| \sigma \|_{H^{s}(\mesh)},\\
  \| u - I_{V^{k-1}_h} u \|_{1,h} &\lesssim h^s \| u \|_{H^{s+1}(\mesh)}.
\end{align*}
\end{lemma}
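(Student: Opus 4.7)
The plan is to reduce both estimates to local (element-wise) approximation results via a standard reference-element/Bramble--Hilbert argument, and then to handle the facet contributions by scaled trace inequalities. Both operators are locally defined and reproduce polynomials of order $k-1$ on every $T \in \mesh$, so the correct powers of $h$ will come out of Bramble--Hilbert automatically; the only non-routine content is the jump term for $u$ and the scaled normal-tangential facet term for $\sigma$.

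First I would address the $H(\divergence)$-conforming BDM interpolant $I_{V^{k-1}_h}$. By the classical projection-based interpolation theory of \cite{MR1746160, Demkowicz2008}, one has on every $T \in \mesh$ the local estimate
\begin{align*}
  \| u - I_{V^{k-1}_h} u \|_T + h\|\nabla(u - I_{V^{k-1}_h} u)\|_T \lesssim h^{s+1} \| u \|_{H^{s+1}(T)},
\end{align*}
for $0 \le s \le k-1$. Squaring and summing over $T$ controls the volumetric part of $\| u - I_{V^{k-1}_h} u \|_{1,h}$. For the jump contribution I would exploit that $u \in H^1(\Omega, \rr^d)$ implies $\jump{u_t}|_F = 0$ on every interior facet, so that $\jump{(u - I_{V^{k-1}_h}u)_t}|_F$ is a jump of the interpolation error alone, and then apply the scaled trace inequality $\|v\|_F^2 \lesssim h^{-1}\|v\|_T^2 + h\|\nabla v\|_T^2$ to $v = u - I_{V^{k-1}_h}u$ on both elements adjacent to $F$. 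Combining the two gives $h^{-1}\|\jump{(u - I_{V^{k-1}_h}u)_t}\|_F^2 \lesssim h^{-2}\|u - I_{V^{k-1}_h}u\|_T^2 + \|\nabla(u - I_{V^{k-1}_h}u)\|_T^2$, which by the volume bound above is $\lesssim h^{2s}\|u\|_{H^{s+1}(T_1\cup T_2)}^2$, and summing over facets closes the estimate.

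The stress interpolant $I_{\Sigma^{k-1}_h}$ is treated essentially as in \cite{Lederer:2019b}. I would transform to a reference element $\widehat T$ via the appropriate (matrix) Piola pull-back, chosen so that the trace-free constraint $\tr(\sigma) = 0$ and the normal-tangential facet degrees of freedom are preserved under the mapping. On $\widehat T$ the reference interpolant is a bounded projection from $H^1(\widehat T, \rr^{d\times d})$ onto a finite-dimensional polynomial space that reproduces $\mathcal P^{k-1}(\widehat T, \rr^{d\times d})\cap \{\tr = 0\}$, so Bramble--Hilbert gives $\|\widehat\sigma - I_{\widehat\Sigma}\widehat\sigma\|_{L^2(\widehat T)} \lesssim |\widehat\sigma|_{H^s(\widehat T)}$ for $0 \le s \le k-1$, and the usual scaling back to $T$ yields the elementwise bound $\|\sigma - I_{\Sigma^{k-1}_h}\sigma\|_T \lesssim h^s \|\sigma\|_{H^s(T)}$. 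For the scaled normal-tangential facet term, the same trace inequality $\|v\|_F^2 \lesssim h^{-1}\|v\|_T^2 + h\|\nabla v\|_T^2$ applied to $v = (\sigma - I_{\Sigma^{k-1}_h}\sigma)$ on the adjacent element gives $h\|(\sigma - I_{\Sigma^{k-1}_h}\sigma)_{nt}\|_F^2 \lesssim \|\sigma - I_{\Sigma^{k-1}_h}\sigma\|_T^2 + h^2\|\nabla(\sigma - I_{\Sigma^{k-1}_h}\sigma)\|_T^2$, and a further Bramble--Hilbert estimate on $\nabla\sigma$ turns the right-hand side into $h^{2s}\|\sigma\|_{H^s(T)}^2$ as required.

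The main obstacle I anticipate is the stress case, specifically selecting the correct Piola transform on the reference element and checking that the composite interpolant is well-defined (in particular, integrable) for $\sigma \in H^1(\Omega, \rr^{d\times d})$ --- this is the point at which the facet degrees of freedom must be shown to make sense, and where the trace-free constraint must be shown to be preserved under pull-back. Once this scaling step is settled, every remaining ingredient is a standard finite element approximation argument that matches the structure already used for $I_{V^{k-1}_h}$.
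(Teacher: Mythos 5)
Your proposal is correct and follows essentially the same route as the paper, which simply delegates these bounds to the standard Bramble--Hilbert/scaling argument for the BDM interpolant (citing \cite{lehrenfeld2010hybrid, Boffi:book}) and to \cite{Lederer:2019b} for $I_{\Sigma^{k-1}_h}$, where the scaled normal-tangential facet term is handled exactly via the mapping/scaling machinery you outline (cf.\ the norm equivalence of Lemma~\ref{lem::normeqsigma}). Your filled-in details---local element estimates plus scaled trace inequalities for the jump and facet contributions---are the standard way those cited proofs proceed, so there is no substantive difference.
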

\begin{proof}
The estimate of $I_{V^{k-1}_h}$ follows with the standard techniques and is based on the Bramble-Hilbert lemma. We refer to for example to \cite{lehrenfeld2010hybrid, Boffi:book} for a detailed proof. The proof for $I_{\Sigma^{k-1}_h}$ can be found in \cite{Lederer:2019b}. Note, that the estimate for $I_{\Sigma^{k-1}_h}$ is motivated by the norm equivalence given in Lemma \ref{lem::normeqsigma}.
\end{proof}
\begin{theorem}[Optimal convergence] \label{th::convergence}
  Let $u \in H^1(\Omega, \rr^d) \cap H^m(\mathcal{T}, \rr^d)$, $\sigma \in H^1(\Omega, \rr^{d\times d}) \cap H^{m-1}(\mesh,\Omega^{d \times d})$ be the exact solution of  \eqref{eq:mixedstokes}, and let $\psi \in \Psi$ be the exact stream function such that $\curl(\psi) = u$. Let $\sigma_h, \psi_h \in \sigma_h^{k-1} \times W_h^k$ be the solution of the HHJ-like method \eqref{eq::dischhjrotthree} and set $u_h := \curl(\psi_h)$. For $s = \min(m-1, k-1)$ there holds the error estimate
  \begin{align*}
    \frac{1}{\nu}\| \sigma - \sigma_h\|_{L^2} + \| u - u_h \|_{1,h} \lesssim h^s \| u \|_{H^{s+1}(\mesh)}
  \end{align*}
\end{theorem}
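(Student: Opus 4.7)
The plan is to run a standard Strang/Brezzi argument: split each error into an interpolation part, controlled directly by Lemma~\ref{lem::intop}, and a discrete part, controlled by the inf-sup stability of Lemmas~\ref{lem::disccont}--\ref{lem::discinfsup} together with the consistency identity from Theorem~\ref{th::consistency}.

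First I would fix interpolants $\sigma_I := I_{\Sigma^{k-1}_h}\sigma$ and $\psi_I := I_{W^k_h}\psi$. The commuting property \eqref{eq::commintop} gives $\curl(\psi_I) = I_{V^{k-1}_h}u =: u_I$, and since $\lambda_h=0$ by Remark~\ref{rem::nograds}, I may subtract a discrete gradient to arrange $\psi_I \in W^{k,\perp}_h$ without changing $u_I$. This allows me to work with the simplified inf-sup \eqref{es::infsupsubset}. Writing $e_\sigma := \sigma_I - \sigma_h$ and $e_\psi := \psi_I - \psi_h \in W^{k,\perp}_h$, the triangle inequality together with the identity $|\psi - \psi_I|_{1,\curl,h} = \|u - u_I\|_{1,h}$ reduces the theorem to bounding $\nu^{-1}\|e_\sigma\|_{L^2} + |e_\psi|_{1,\curl,h}$.

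Subtracting \eqref{eq::dischhjrotthreenograd} from the consistency identity of Theorem~\ref{th::consistency} produces the Galerkin orthogonalities
\begin{align*}
a(e_\sigma,\tau_h) + b(\tau_h,\curl(e_\psi)) &= -a(\sigma-\sigma_I,\tau_h) - b(\tau_h,\curl(\psi-\psi_I)), \\
b(e_\sigma,\curl(\phi_h)) &= -b(\sigma-\sigma_I,\curl(\phi_h)),
\end{align*}
for all $\tau_h \in \Sigma^{k-1}_h$ and $\phi_h \in W^{k,\perp}_h$. To obtain $\nu$-robust constants I would measure $\Sigma^{k-1}_h$ in the weighted norm $\nu^{-1/2}\|\cdot\|_{L^2}$ and $W^{k,\perp}_h$ in $\nu^{1/2}|\cdot|_{1,\curl,h}$; in these norms the continuity, kernel ellipticity and inf-sup constants from Lemmas~\ref{lem::disccont}--\ref{lem::discinfsup} become $\nu$-independent, so standard Brezzi theory yields $\nu^{-1/2}\|e_\sigma\|_{L^2} + \nu^{1/2}|e_\psi|_{1,\curl,h}$ bounded by the sum of the dual norms of the two residual functionals on the right.

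The main obstacle is bounding $b(\sigma - \sigma_I, \curl(\phi_h))$, since $\sigma - \sigma_I$ does not belong to $\Sigma^{k-1}_h$ and Lemma~\ref{lem::disccont} does not apply directly. Using the first representation in \eqref{eq:discbblf} and element-/face-wise Cauchy--Schwarz gives
\begin{align*}
|b(\sigma-\sigma_I,\curl(\phi_h))| \lesssim \Bigl(\|\sigma-\sigma_I\|_{L^2} + \bigl(\sum_{F\in\facets} h\,\|(\sigma-\sigma_I)_{nt}\|_F^2\bigr)^{1/2}\Bigr)\,|\phi_h|_{1,\curl,h},
\end{align*}
and the bracketed quantity is precisely the mesh-weighted norm controlled by Lemma~\ref{lem::intop} via $h^s\|\sigma\|_{H^s(\mesh)}$. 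The remaining residual pieces $a(\sigma-\sigma_I,\cdot)$ and $b(\cdot,\curl(\psi-\psi_I)) = b(\cdot, u - u_I)$ follow immediately from continuity together with Lemma~\ref{lem::intop}. Using $\sigma = \nu\nabla u$ to convert $\sigma$-norms into $\nu\,\|u\|_{H^{s+1}(\mesh)}$, collecting the $\nu$-factors and dividing the Brezzi bound by $\nu^{1/2}$ yields $\nu^{-1}\|e_\sigma\|_{L^2} + |e_\psi|_{1,\curl,h} \lesssim h^s\|u\|_{H^{s+1}(\mesh)}$; a final triangle inequality with the interpolation estimates for $\sigma - \sigma_I$ and $u - u_I$ then completes the proof.
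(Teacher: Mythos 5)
Your proposal is correct and follows essentially the same route as the paper: triangle inequality plus commuting interpolants, the $\nu$-weighted product norm, the reduced inf-sup \eqref{es::infsupsubset} on $W^{k,\perp}_h$ combined with consistency, and the element/face-wise Cauchy--Schwarz bound producing the mesh-weighted $nt$-term handled by Lemma~\ref{lem::intop}. The only (harmless) deviation is how you place the stream-function interpolant in $W^{k,\perp}_h$: you subtract a discrete gradient (leaving $\curl$ unchanged), whereas the paper verifies directly that $I_{W^k_h}\psi$ is already orthogonal to $\nabla S^{k+1}_h$ using a property of the projection-based interpolation and $\div(\psi)=0$.
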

\begin{proof}
  In a first step we bound the error by the triangle inequality which gives
  \begin{align*}
    \frac{1}{\nu}\| \sigma - \sigma_h\|_{L^2} + \| u - u_h \|_{1,h} &\le \frac{1}{\nu}\| \sigma - I_{\Sigma^{k-1}_h} \sigma\|_{L^2} + \| u - I_{V^{k-1}_h}u \|_{1,h} \\
    &+ \frac{1}{\nu}\|  I_{\Sigma^{k-1}_h} \sigma - \sigma_h\|_{L^2} + \| I_{V^{k-1}_h}u - u_h \|_{1,h}.
  \end{align*}
  Applying Lemma \ref{lem::intop} shows, that the first two terms on the right side already converge with the optimal order. We continue with the last two terms on the right side. Since, $u_h = \curl \psi_h$ and $u = \curl \psi$, the commuting property of the interpolation operators, see equation \eqref{eq::commintop}, gives
  \begin{align*}
    \frac{1}{\nu}\|  I_{\Sigma^{k-1}_h} \sigma - \sigma_h\|_{L^2} +  \| I_{V^{k-1}_h}u &- u_h \|_{1,h} \\
    &= \frac{1}{\nu}\|  I_{\Sigma^{k-1}_h} \sigma - \sigma_h\|_{L^2} +  \| I_{V^{k-1}_h}\curl \psi - \curl \psi_h \|_{1,h} \\
    &= \frac{1}{\nu}\|  I_{\Sigma^{k-1}_h} \sigma - \sigma_h\|_{L^2} +  \| \curl I_{W^k_h}\psi - \curl \psi_h \|_{1,h} \\
   &\le \frac{1}{\nu}\|  I_{\Sigma^{k-1}_h} \sigma - \sigma_h\|_{L^2} +  \| I_{W^k_h}\psi - \psi_h \|_{1,\curl,h}.
  \end{align*}
  In the following we aim to use the discrete stability results proven in the last section. To this end we define the following product space norm
  \begin{align*}
    \| (\tau_h,\phi_h)\|_* := \sqrt{\nu} \| \phi_h\|_{1,\curl,h} + \frac{1}{\sqrt{\nu}}\|\tau_h\|_{L^2} \quad \forall (\tau_h,\phi_h) \in \Sigma^{k-1}_h \times W^k_h.
  \end{align*}
  Following the definition of $I_{W^k_h}$, equation (200) in \cite{Demkowicz2008}, we see that for an arbitrary $\mu_h \in S^{k+1}_h$ we have 
  \begin{align*}
    \int_\Omega I_{W^{k}_h} \psi \cdot \nabla \mu_h \dif x =  \int_\Omega \psi \cdot \nabla \mu_h \dif x = - \int_\Omega  \div(\psi) \mu_h \dif x = 0,
  \end{align*}
  where we used that the stream function fulfills $\div(\psi) = 0$. Now as $\psi_h$ is the solution of \eqref{eq::dischhjrotthree}, we have $I_{W^k_h}\psi - \psi_h \in W_h^{k,\perp}$, thus Lemma \ref{lem::disccont}, Lemma \ref{lem::discell} and the inf-sup condition \eqref{es::infsupsubset} gives the estimate 
  \begin{align*}
    \|  &(I_{\Sigma^{k-1}_h} \sigma - \sigma_h, I_{W^k_h}\psi - \psi_h)\|_{*}\\
        & \lesssim \!\!\!\sup\limits_{\substack{(\tau_h,\phi_h) \in \\  \Sigma^{k-1}_h \times W^{k,\perp}_h}}\!\!\! \frac{a(I_{\Sigma^{k-1}_h} \sigma - \sigma_h, \tau_h) + b(I_{\Sigma^{k-1}_h} \sigma - \sigma_h,\curl (\phi_h)) + b(\tau_h,\curl( I_{W^k_h}\psi - \psi_h)) }{\| ( \tau_h, \phi_h) \|_{*}} \\
    & \lesssim \!\!\!\sup\limits_{\substack{(\tau_h,\phi_h) \in \\  \Sigma^{k-1}_h \times W^{k,\perp}_h}}\!\!\! \frac{a(I_{\Sigma^{k-1}_h} \sigma - \sigma, \tau_h) + b(I_{\Sigma^{k-1}_h} \sigma - \sigma,\curl (\phi_h)) + b(\tau_h,\curl( I_{W^k_h}\psi - \psi)) }{\| ( \tau_h, \phi_h) \|_{*}}.
  \end{align*}
  where we used Theorem \ref{th::consistency} in the last step. Following the same steps as in the proof of Theorem 6.3 in \cite{Lederer:2019b}, the Cauchy-Schwarz inequality (as in the proof of Lemma \ref{lem::disccont}) allows us to bound
  \begin{align*}
    &a(I_{\Sigma^{k-1}_h} \sigma - \sigma, \tau_h) + b(I_{\Sigma^{k-1}_h} \sigma - \sigma,\curl \phi_h) + b(\tau_h,\curl( I_{W^k_h}\psi - \psi)) \\
    & \lesssim  \left(\| ( I_{\Sigma^{k-1}_h} \sigma - \sigma, I_{W^k_h}\psi - \psi)\|_{*} + \frac{1}{\sqrt{\nu}}\sqrt{ \sum\limits_{F \in \facets} h \|( I_{\Sigma^{k-1}_h} \sigma - \sigma)_{nt} \|^2_{F}} \right)\| ( \tau_h, \phi_h) \|_{*},
  \end{align*}
and we conclude the proof with the interpolation error estimates of Lemma \ref{lem::intop}.
\end{proof}
\begin{remark} \label{rem::optconv}
  We want to emphasize that although $s = \min(m-1,k-1)$, the result of Theorem \ref{th::convergence} reads as an optimal convergence result. Since the fixed polynomial order $k$ corresponds to the approximation order of the stream function $\psi_h$ in the space $W_h^k$, it follows that for $u_h = \curl \psi_h \in V_h^{k-1}$ the convergence rate of the error measured in a discrete $H^1$-like norm is only expected to be at most of order $\mathcal{O}(h^{k-1})$.
  Further note, that if the finite element library for the implementation allows an approximation of the reduced system \eqref{eq::dischhjrotthreenograd}, hence provides an explicit basis for $W_h^{k,\perp}$, the method is also optimal with respect to the number of degrees of freedom compared to a direct approximation of $u_h \in V_h^{k-1}$ as for example in \cite{Lederer:2019b,Lehrenfeld:2016}. This follows directly by the properties of the discrete de Rham complex as in \cite{zaglmayr2006high}.
\end{remark}
\begin{remark}[Pressure robustness]\label{rem::pressurerob}
 Theorem \ref{th::convergence} shows that the velocity error can be bounded independently of the continuous pressure solution. Methods that allow to deduce such error estimates are called pressure robust and we present a brief explanation in the following. Let $\mathbb{P}$ be the continuous Helmholtz projection (see \cite{Girault:book}) onto the rotational part of a given load $f$
\begin{align*}
  f = \nabla \theta + \xi =: \nabla \theta + \mathbb{P}(f),
\end{align*}
with $\theta \in H^1(\Omega)/\rr$ and  $\xi =: \mathbb{P}(f) \in \{ v \in H_0(\divergence, \Omega): \divergence(v) = 0\}$. Testing the momentum balance \eqref{eq:mixedstokestwo} with an arbitrary (exactly) divergence-free test function $v$, shows that $\sigma = \nu \nabla u$ is steered only by $\mathbb{P}(f)$ since integration by parts gives
\begin{align*}
  \int_\Omega \nabla \theta \cdot v \dif x = 0.
\end{align*}
In \cite{Linke:2014} the author showed that this property might not be
handed over from the continuous to the discrete setting since for classical mixed methods 
a discrete divergence-free velocity test function might not be exactly divergence-free. In this case, one can only deduce a velocity error estimate that depends on the best approximation of the continuous pressure solution that includes a scaling $1/\nu$ which can produce big errors for vanishing viscosities $\nu \rightarrow 0$, see \cite{Linke:2016a,Linke:2016c, Lederer:2017b, John:2017}. One advantage of the stream function formulation is, that the right hand side of the weak formulation, see \eqref{eq:stream}, \eqref{eq::mixedstressstreamweak} and \eqref{eq::dischhjrotthree}, is only tested with $\curl(\phi)$ or $\curl(\phi_h)$ in the continuous and discrete setting, respectively. Hence, in both situations we have again with integration by parts that
\begin{align*}
  \int_\Omega \nabla \theta \cdot \curl(\phi) \dif x =   \int_\Omega \nabla \theta \cdot \curl(\phi_h) \dif x = 0,
\end{align*}
and thus the (discrete) velocity $u_h = \curl(\psi_h)$ is again only steered by $\mathbb{P}(f)$ which allows to derive the pressure robust error estimate of Theorem \ref{th::convergence}.
\end{remark}
\section{Post processing for the pressure}
Following chapter 4.4 in \cite{Girault:book}, we can construct a simple post processing which allows to approximate the pressure of the Stokes equations \eqref{eq:mixedstokes}. To this end we define the (discontinuous) pressure space
\begin{align*}
  Q_h^k := \mathcal{P}^k(\Omega, \rr) \cap L^2_0(\Omega,\rr),
\end{align*}
with the property $\divergence(V_h^{k-1}) = Q_h^{k-2}$. Further, there holds the (polynomial robust, see \cite{LedererSchoeberl2017}) Stokes inf-sup condition
\begin{align} \label{eq::infsup}
  \sup\limits_{0 \neq v_h \in V_h^{k-1}} \frac{\int_\Omega \divergence(v_h) q_h \dif x}{\| v_h\|_{1,h}} \gtrsim \|q_h\|_{L^2} \quad \forall q_h \in Q^{k-2}_h.
\end{align}
Now let $\sigma_h \in \Sigma_h^{k-1}$ be the solution of the HHJ-like method \eqref{eq::dischhjrotthree}, then we define the weak problem: Find $p_h \in Q^{k-2}_h$ such that
\begin{align} \label{eq::pressuredisc}
  \int_\Omega p_h \divergence(v_h) \dif x = - \int_{\Omega} f \cdot v_h \dif x + b(\sigma_h,v_h) \quad \forall v_h \in V_h^{k-1}.
\end{align}
\begin{theorem} \label{th::convergencep}
  Let $p \in L^2_0(\Omega, \rr) \cap H^{m-1}(\mesh, \rr)$ be the exact solution of \eqref{eq:mixedstokes}, and let $u,u_h,\sigma,\sigma_h$ be defined as in Theorem \ref{th::convergence}. Further, let $p_h$ be the solution of~\eqref{eq::pressuredisc}. For $s = \min(m-1, k-1)$ there holds the error estimate
  \begin{align*}
    \| p - p_h \|_{L^2} \lesssim h^s \| u \|_{H^{s+1}(\mesh)} + \| p \|_{H^s(\mesh)}.
  \end{align*}
\end{theorem}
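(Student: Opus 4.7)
The approach is the standard inf-sup post-processing argument for mixed pressure discretisations. First I would split the error using the $L^2$-projection $\pi^{k-2}_h p \in Q_h^{k-2}$ onto the discrete pressure space, writing $p - p_h = (p - \pi^{k-2}_h p) + (\pi^{k-2}_h p - p_h)$. The first summand is controlled by the standard Bramble--Hilbert estimate $\|p - \pi^{k-2}_h p\|_{L^2} \lesssim h^s \|p\|_{H^s(\mesh)}$. The second lies in $Q_h^{k-2}$, so the Stokes inf-sup condition \eqref{eq::infsup} yields
\begin{align*}
\|\pi^{k-2}_h p - p_h\|_{L^2} \lesssim \sup_{0 \neq v_h \in V_h^{k-1}} \frac{\int_\Omega \div(v_h)\,(\pi^{k-2}_h p - p_h)\dif x}{\|v_h\|_{1,h}}.
\end{align*}
Because $\div(V_h^{k-1}) = Q_h^{k-2}$, the $L^2$-projection can be removed from under the integral, and the numerator equals $\int_\Omega \div(v_h)(p - p_h)\dif x$.

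The crucial step is a consistency identity linking the continuous pair $(\sigma,p)$ to the post-processing equation \eqref{eq::pressuredisc}. Starting from $\div(\sigma) = \nabla p - f$ in \eqref{eq:mixedstokestwo}, I would perform elementwise integration by parts of $\int_T \nabla p \cdot v_h \dif x$ and sum over $T \in \mesh$. The arising facet terms $\sum_F \int_F \jump{p}\,(v_h)_n$ vanish because $p$ is continuous and $v_h \in H_0(\div,\Omega)$; simultaneously, the facet terms in the second representation of $b(\sigma, v_h)$ drop because $\jump{\sigma_{nn}} = 0$ for the smooth exact stress. This yields the identity
\begin{align*}
\int_\Omega p\,\div(v_h) \dif x + b(\sigma, v_h) = -\int_\Omega f \cdot v_h\dif x \qquad \forall v_h \in V_h^{k-1},
\end{align*}
which mirrors \eqref{eq::pressuredisc}. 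Subtracting \eqref{eq::pressuredisc} from this identity gives
\begin{align*}
\int_\Omega (p - p_h)\,\div(v_h)\dif x = b(\sigma_h - \sigma, v_h),
\end{align*}
so only the stress error $\sigma - \sigma_h$ drives the pressure error — a structural reflection of pressure robustness.

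It then remains to establish the continuity estimate $b(\tau, v_h) \lesssim \|\tau\|_{L^2}\|v_h\|_{1,h}$ for $\tau \in H^1(\mesh,\rr^{d \times d}) + \Sigma_h^{k-1}$ and $v_h \in V_h^{k-1}$. This is proved exactly as the second bound of Lemma \ref{lem::disccont}, using the first representation of $b(\cdot,\cdot)$, elementwise and facet Cauchy--Schwarz, the discrete trace inequality on $\jump{(v_h)_t}$, and the norm equivalence of Lemma \ref{lem::normeqsigma} (extended to the non-polynomial stress via the scaled facet contribution). Theorem \ref{th::convergence} then supplies $\|\sigma - \sigma_h\|_{L^2} \lesssim \nu h^s \|u\|_{H^{s+1}(\mesh)}$, so that combining the inf-sup bound with the continuity estimate and the projection error yields the claimed inequality via the triangle inequality.

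The main obstacle is the consistency identity, since it demands careful bookkeeping of the broken integration by parts for both $p$ and $\sigma$ against the $H(\div)$-conforming, tangentially discontinuous test functions in $V_h^{k-1}$; one must simultaneously invoke the regularity of the exact $(\sigma,p)$ and the essential boundary conditions encoded in $V_h^{k-1}$ to ensure that every interior-facet and boundary contribution actually cancels. A secondary subtlety is that the continuity of $b(\cdot, v_h)$ must be applied to the non-polynomial argument $\sigma - \sigma_h$, which is why a trace-based version of Lemma \ref{lem::normeqsigma} — implicit in the interpolation estimate of Lemma \ref{lem::intop} — is needed; once these two points are handled, the remaining steps are entirely routine.
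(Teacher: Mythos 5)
Your proposal is correct and is essentially the argument the paper itself invokes (its proof is a one-line citation of Theorem 45 in Girault--Raviart together with \eqref{eq::infsup}, Theorem \ref{th::convergence} and Lemma \ref{lem::intop}): projection split, discrete inf-sup, the consistency identity, continuity of $b$ with the scaled facet contribution for the non-polynomial part, and the stress error bound. One bookkeeping remark: your identity $\int_\Omega (p-p_h)\div(v_h)\dif x = b(\sigma_h-\sigma,v_h)$ tacitly uses the consistent sign $-b(\sigma_h,v_h)$ in \eqref{eq::pressuredisc} (as dictated by \eqref{eq::mixedstressstokesweaktwo}); with the sign as printed the subtraction would instead yield $-b(\sigma+\sigma_h,v_h)$, so this points to a sign slip in the paper's display rather than a gap in your argument.
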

\begin{proof}
The proof follows with exactly the same steps as the proof of Theorem 45 in chapter 4.4 in \cite{Girault:book} and involves \eqref{eq::infsup}, the results of Theorem \ref{th::convergence} and the properties of the interpolation operators of Lemma \ref{lem::intop}.
\end{proof}
\section{Numerical example}
In this section we present a numerical example to validate the findings of Section \ref{sec::errorana}. All numerical examples were implemented within the finite element library Netgen/NGSolve, see \cite{netgen,ngsolve} and \url{www.ngsolve.org}. 

Let $\Omega = (0,1)^d$ and $f = -\div(\sigma) + \nabla p$ with the exact solutions $u = \curl(\psi)$, $\sigma = \nu \nabla u$ and 
\begin{alignat*}{3}
 &\psi :=x^2(x-1)^2y^2(y-1)^2,& &\quad p := x^5+y^5 - \frac{1}{3},& &\quad \textrm{for } d = 2,\\
 &\psi := \begin{pmatrix}x^2(x-1)^2y^2(y-1)^2z^2(z-1)^2 \\x^2(x-1)^2y^2(y-1)^2z^2(z-1)^2\\x^2(x-1)^2y^2(y-1)^2z^2(z-1)^2 \end{pmatrix},&& \quad p := x^5+y^5+z^5 - \frac{1}{2},& &\quad \textrm{for } d = 3.
\end{alignat*}
In Table \ref{twodexample} and \ref{threedexample} we present several errors including their estimated order of convergence (eoc) for a fixed viscosity $\nu = 10^{-6}$, polynomial orders $k=2,3,4$ (where the order corresponds to the approximation space $W_h^k$ of the stream function, see Remark \ref{rem::optconv}) for the two and three dimensional case, respectively. As predicted by Theorem \ref{th::convergence} and Theorem \ref{th::convergencep} the $H^1$-seminorm error of the velocity $u_h$, the $L^2$-norm error of the stress $\sigma_h$ and the $L^2$-norm error of the pressure $p_h$ converge with optimal orders. Beside that, as given in the most right columns of Table \ref{twodexample} and \ref{threedexample}, we further observe that the $L^2$-norm error of the velocity $u_h$ converges at one order higher. This is explained by exploiting a standard Aubin-Nitsche duality argument to prove that the solution $u_h \in V^{k-1}_h$ fulfills the estimate
\begin{align*}
  \| u - u_h \|_{L^2} \lesssim h^k \|u\|_{H^k},
\end{align*}
whenever the problem admits full regularity and the exact solution is smooth enough.
\begin{table}[h]
\begin{center}
\footnotesize
\begin{tabular}{@{~}c@{~}|@{~~~}c@{~~~(}c@{)~~~}c@{~~~(}c@{)~~~}c@{~~~(}c@{)~~~}c@{~~~(}c@{)}}
      \toprule
  $|\mathcal{T}|$ & $|| \nabla u - \nabla u_h||_0$ & \footnotesize eoc &$|| \sigma - \sigma_h||_0$ & \footnotesize eoc &$|| p - p_h ||_0$ & \footnotesize eoc &$|| u - u_h||_0$ & \footnotesize eoc  \\
  \midrule
  \multicolumn{9}{c}{$k=2$}\\
56& \num{0.13001393187549476}&--& \num{0.11927413409736154}&--& \num{0.09143627315352611}&--& \num{0.007644034948413183}&--\\
224& \num{0.027812984862648382}&\numeoc{2.2248357393626854}& \num{0.015533712570080472}&\numeoc{2.940806631377618}& \num{0.05362568445886035}&\numeoc{0.7698424452479506}& \num{0.0005686071242203571}&\numeoc{3.7488302941835387}\\
896& \num{0.0076307013030320415}&\numeoc{1.8658710218285914}& \num{0.0030452600213262586}&\numeoc{2.3507653523653445}& \num{0.026983084003147504}&\numeoc{0.9908689077488075}& \num{6.505321464145376e-05}&\numeoc{3.127739919604422}\\
3584& \num{0.0038076618626950924}&\numeoc{1.002910288089736}& \num{0.001499657759541048}&\numeoc{1.0219321213373218}& \num{0.013513236488922484}&\numeoc{0.9976820001647605}& \num{1.6278375597645358e-05}&\numeoc{1.9986636085645344}\\
14336& \num{0.0019059463808071054}&\numeoc{0.9983978335172071}& \num{0.000747141087629903}&\numeoc{1.0051806898516025}& \num{0.006759343480347509}&\numeoc{0.9994182161163664}& \num{4.073243565777835e-06}&\numeoc{1.9987067515610044}\\
57344& \num{0.0009533954988531016}&\numeoc{0.9993608137614836}& \num{0.0003729354136632576}&\numeoc{1.002454901617547}& \num{0.003380012817933562}&\numeoc{0.9998544100131245}& \num{1.0182518251660338e-06}&\numeoc{2.0000836847559826}\\
\midrule
  \multicolumn{9}{c}{$k=3$}\\
  56& \num{0.00523034986584032}&--& \num{0.0012621316072134686}&--& \num{0.010726816952100888}&--& \num{9.164809454938274e-05}&--\\
224& \num{0.0019730198686088362}&\numeoc{1.40650196941348}& \num{0.00040498211530886767}&\numeoc{1.639932250728612}& \num{0.0032627015411204672}&\numeoc{1.7170831113967586}& \num{1.9911689551159115e-05}&\numeoc{2.2024892439454162}\\
896& \num{0.0004977857379270334}&\numeoc{1.9868086829640055}& \num{0.00010176062595352025}&\numeoc{1.9926787474469612}& \num{0.000822126046113274}&\numeoc{1.9886355154678856}& \num{2.502211748581717e-06}&\numeoc{2.992339854913078}\\
3584& \num{0.0001247947138501141}&\numeoc{1.9959680715004633}& \num{2.5665233547510327e-05}&\numeoc{1.987292156216387}& \num{0.00020594076383099926}&\numeoc{1.9971301762106557}& \num{3.1376413537413857e-07}&\numeoc{2.9954515216802426}\\
14336& \num{3.123062488269821e-05}&\numeoc{1.998523483572417}& \num{6.4533463951403885e-06}&\numeoc{1.9916960168279818}& \num{5.15108646183014e-05}&\numeoc{1.9992807634560525}& \num{3.9290651114319897e-08}&\numeoc{2.997422474248251}\\
57344& \num{7.810945070224002e-06}&\numeoc{1.9993924158870597}& \num{1.6185323749525798e-06}&\numeoc{1.9953612432748329}& \num{1.2879322249973986e-05}&\numeoc{1.9998200797907417}& \num{4.9160266489772245e-09}&\numeoc{2.998621434498366}\\
\midrule
  \multicolumn{9}{c}{$k=4$}\\
  56& \num{0.000681391672978328}&--& \num{0.0001302684843128565}&--& \num{0.0005701356016740986}&--& \num{5.408709018198976e-06}&--\\
224& \num{0.0001822254718591255}&\numeoc{1.9027596807508587}& \num{2.49550282082735e-05}&\numeoc{2.3840856570365836}& \num{0.00010986282366795334}&\numeoc{2.3756018150056817}& \num{9.173945306273816e-07}&\numeoc{2.5596700729644684}\\
896& \num{2.3474531364973576e-05}&\numeoc{2.956556374216378}& \num{3.154230905306352e-06}&\numeoc{2.9839703530903487}& \num{1.3825268848174201e-05}&\numeoc{2.9903238365461293}& \num{5.974378828219136e-08}&\numeoc{3.940681681078165}\\
3584& \num{2.9607670530971787e-06}&\numeoc{2.987053464956877}& \num{3.9463370988444426e-07}&\numeoc{2.9987021739622546}& \num{1.7310660672100465e-06}&\numeoc{2.9975748433699128}& \num{3.779613591687419e-09}&\numeoc{3.9824780667621154}\\
14336& \num{3.7093528837768137e-07}&\numeoc{2.996731558647405}& \num{4.9261491018216656e-08}&\numeoc{3.0019819970669888}& \num{2.1647426730970688e-07}&\numeoc{2.9993933426356945}& \num{2.3704732630020193e-10}&\numeoc{3.994991722392563}\\
57344& \num{4.6392967413546165e-08}&\numeoc{2.999189490281595}& \num{6.1537411035973574e-09}&\numeoc{3.000924641269482}& \num{2.7062128786387647e-08}&\numeoc{2.99984830384561}& \num{2.459578346269868e-11}&\numeoc{3.268692204891301}\\
\bottomrule
\end{tabular}
\caption{The $H^1$-seminorm and the $L^2$-norm error of the discrete velocity $u_h \in V^{k-1}_h$, and the $L^2$-norm error of the discrete pressure $p_h\in Q^{k-2}_h$ and the discrete stress $\sigma_h\in \Sigma^{k-1}_h$ for a fixed viscosity $\nu = 10^{-6}$ and different polynomial order $k=2,3,4$ in the two dimensional case.} \label{twodexample}
\end{center}
\end{table}

\begin{table}[h]
\begin{center}
\footnotesize
\begin{tabular}{@{~}c@{~}|@{~~~}c@{~~~(}c@{)~~~}c@{~~~(}c@{)~~~}c@{~~~(}c@{)~~~}c@{~~~(}c@{)}}
      \toprule
  $|\mathcal{T}|$ & $|| \nabla u - \nabla u_h||_0$ & \footnotesize eoc &$|| \sigma - \sigma_h||_0$ & \footnotesize eoc &$|| p - p_h ||_0$ & \footnotesize eoc &$|| u - u_h||_0$ & \footnotesize eoc  \\
  \midrule
  \multicolumn{9}{c}{$k=2$}\\
6& \num{0.004434122873853067}&--& \num{0.004470307143166375}&--& \num{0.4249981025833159}&--& \num{0.0005456458429224737}&--\\
48& \num{0.004527537661506566}&\numeoc{-0.030077892783735804}& \num{0.003591467999820266}&\numeoc{0.31580029754873734}& \num{0.2942601355870876}&\numeoc{0.5303642915561634}& \num{0.0004032761324498577}&\numeoc{0.43619683515708724}\\
384& \num{0.0024306963364550877}&\numeoc{0.8973569713043908}& \num{0.0016259309279495345}&\numeoc{1.1433076902788248}& \num{0.16486930866818672}&\numeoc{0.8357692506287094}& \num{0.0001076861457648633}&\numeoc{1.904935370030929}\\
3072& \num{0.0012578683759000234}&\numeoc{0.9503887050775189}& \num{0.0008067808576680761}&\numeoc{1.0110172118810072}& \num{0.085007541206985}&\numeoc{0.9556601214168878}& \num{2.82649600576796e-05}&\numeoc{1.9297460901989252}\\
24576& \num{0.0006357499952301691}&\numeoc{0.9844495144383602}& \num{0.00040290206429712666}&\numeoc{1.0017476565581578}& \num{0.042836930059562885}&\numeoc{0.9887357396151178}& \num{7.175755570123929e-06}&\numeoc{1.977812005005429}\\
196608& \num{0.00031902614752816176}&\numeoc{0.9947848733134577}& \num{0.00020143917727951153}&\numeoc{1.0000849013472959}& \num{0.02146047334372828}&\numeoc{0.9971731946780493}& \num{1.8019408621545757e-06}&\numeoc{1.9935790839866654}\\
\midrule
  \multicolumn{9}{c}{$k=3$}\\
6& \num{0.0061032273801362655}&--& \num{0.004450364689785478}&--& \num{0.18870050314187617}&--& \num{0.0005712178697702683}&--\\
48& \num{0.0021353001932910088}&\numeoc{1.5151334332421376}& \num{0.0009757412337626561}&\numeoc{2.189353062676604}& \num{0.07499527783970168}&\numeoc{1.3312266069774374}& \num{7.410004704159338e-05}&\numeoc{2.946494749292355}\\
384& \num{0.0006446331195674506}&\numeoc{1.7278886894936918}& \num{0.00030126110824357446}&\numeoc{1.6954841582224178}& \num{0.021097208327425423}&\numeoc{1.8297476495837746}& \num{1.23277558255912e-05}&\numeoc{2.587564266452913}\\
3072& \num{0.00017529955918102472}&\numeoc{1.8786559439949944}& \num{8.043019689365818e-05}&\numeoc{1.9052052811787847}& \num{0.005428310534654722}&\numeoc{1.9584769482355937}& \num{1.6723278096966175e-06}&\numeoc{2.8819806146240787}\\
24576& \num{4.493624261026761e-05}&\numeoc{1.9638709659558737}& \num{2.056157726746341e-05}&\numeoc{1.9677863146406278}& \num{0.0013668098199461753}&\numeoc{1.989690736587335}& \num{2.1486966658698875e-07}&\numeoc{2.960323936987048}\\
196608& \num{1.1316236281040374e-05}&\numeoc{1.9894852917622727}& \num{5.1843266204728604e-06}&\numeoc{1.987722418949129}& \num{0.0003423123604024178}&\numeoc{1.9974272260416395}& \num{2.7139259971408313e-08}&\numeoc{2.985008542871208}\\
\midrule
  \multicolumn{9}{c}{$k=4$}\\
6& \num{0.002603430400014771}&--& \num{0.001002733452237441}&--& \num{0.03218835776187978}&--& \num{0.00011590700833582803}&--\\
48& \num{0.000626645511328069}&\numeoc{2.05469238119103}& \num{0.00019383013381372408}&\numeoc{2.3710733760771325}& \num{0.0054374914083679575}&\numeoc{2.5655258515252144}& \num{1.3204850671736683e-05}&\numeoc{3.1338279099076605}\\
384& \num{0.000125781463241854}&\numeoc{2.316730228029362}& \num{2.9589233521784386e-05}&\numeoc{2.7116486462439244}& \num{0.0007267803073645454}&\numeoc{2.903349981933511}& \num{1.2550487969830667e-06}&\numeoc{3.3952526235974974}\\
3072& \num{1.7497485173811685e-05}&\numeoc{2.8456998332556687}& \num{3.941516334036913e-06}&\numeoc{2.9082496671725084}& \num{9.233451338386949e-05}&\numeoc{2.9765774156860187}& \num{8.766348762400829e-08}&\numeoc{3.839623571302862}\\
24576& \num{2.2491894941721614e-06}&\numeoc{2.959670467222739}& \num{5.054872885635757e-07}&\numeoc{2.9630040348267537}& \num{1.158839158212131e-05}&\numeoc{2.994189667941557}& \num{5.642542588408196e-09}&\numeoc{3.957558768861314}\\
196608& \num{2.833055405007371e-07}&\numeoc{2.9889744919128782}& \num{6.396300205613679e-08}&\numeoc{2.9823652559384985}& \num{1.4500053288137864e-06}&\numeoc{2.9985502331901914}& \num{3.5673499768399066e-10}&\numeoc{3.983420738281588}\\
\bottomrule
\end{tabular}
\caption{
  The $H^1$-seminorm and the $L^2$-norm error of the discrete velocity $u_h \in V^{k-1}_h$, and the $L^2$-norm error of the discrete pressure $p_h\in Q^{k-2}_h$ and the discrete stress $\sigma_h\in \Sigma^{k-1}_h$ for a fixed viscosity $\nu = 10^{-6}$ and different polynomial order $k=2,3,4$ in the three dimensional case.}\label{threedexample}
\end{center}
\end{table}


\bibliographystyle{siamplain}
\bibliography{references}
\end{document}